\newtheorem{thm}{Theorem}[section]
\newtheorem{cor}[thm]{Corollary}
\newtheorem{lem}[thm]{Lemma}
\newtheorem{pro}[thm]{Proposition}
\theoremstyle{definition}
\theoremstyle{remark}
\numberwithin{equation}{section}
\begin{document}
\title{Firing rate and spatial correlation in a stochastic neural
  field model}
\author{Yao Li}
\address{Yao Li: Department of Mathematics and Statistics, University
  of Massachusetts Amherst, Amherst, MA, 01002, USA}
\email{yaoli@math.umass.edu}

\author{Hui Xu}
\address{Hui Xu: Department of Mathematics, Amherst College, Amherst,
  MA, 01002, USA}
\email{huxu18@amherst.edu}
\thanks{Yao Li and Hui Xu were partially supported by the University of
  Massachusetts Amherst FRG/HEG grant.}

\keywords{Neural field model, stochastic stability, mean field
  approximation, spatial correlation}

\begin{abstract}
This paper studies a stochastic neural field model that is extended
from our previous paper \cite{li2017well}. The neural field model consists
of many heterogeneous local populations of neurons. Rigorous results on
the stochastic stability are proved, which further imply the well-definedness of quantities
including mean firing rate and spike count correlation. Then we devote
to address two main topics: the comparison with
mean-field approximations and the spatial correlation of spike
count. We showed that partial synchronization of spiking activities is
a main cause for discrepancies of mean-field
approximations. Furthermore, the spike count correlation between local
populations are studied. We find that the spike count correlation
decays quickly with the distance between corresponding local
populations. Some mathematical justifications of the mechanism of this
phenomenon is also provided. 
\end{abstract}

\maketitle

\section{Introduction}

In mathematical neuroscience, numerous neuronal models have been proposed and
studied. Many models such as the Hodgkin-Huxley model aim to accurately
describe the realistic biophysics process. When more anatomical and
physiological details, such as ionic channels, dendritic tree, spatial
structure of axon, and synapse, are involved, the model usually becomes too complex to study, especially when applying to a large-scale network. On the other end of the spectrum,
there are many mean field models such as Wilson-Cowan model
\cite{wilson1972excitatory, wilson1973mathematical} and
various population density models \cite{haskell2001population,
  cai2004effective, cai2006kinetic} that model the
behavior of a neural population by a few coarse-grained
variables. It is easier to use a mean-field model to describe the
neuronal activities in a large area of the cortex. However, mean field
models usually assume that neuronal interactions are weak, which
cause some inevitable discrepancies. 

Consider a large area of the cortex, such as several hundreds of
hypercolumns in the primary visual cortex. In order to study
experimentally observed phenomena such as surround suppression, 
neuronal models at this scale are necessary. Needless to say, it is not realistic to
model each of millions of neurons by detailed models like the
Hodgkin-Huxley model. Some coarse graining is necessary for these
large scale problems. On the other hand, it is usually not clear how 
much information a mean field model could preserve. The mechanism of
such discrepancy is also little studied. In our previous paper
\cite{li2017well}, these questions are partially answered for a stochastic
neuron model that models a homogeneous and densely connected neuronal
population. Several mean field approximations of the stochastic model
studied in \cite{li2017well} are exactly solvable. We showed that the
strong excitation-inhibition interplay during the partially
synchronous spike volley, called a {\it multiple firing event},
contributes significantly to the discrepancy of the mean-field
approximation. It is believed that such multiple firing event is
related to the Gamma rhythm in the cortex.

The aim of this paper is two-fold. The first half of this paper serves as
an extension to our previous paper \cite{li2017well}. We generalize the
stochastic model studied in \cite{li2017well} to a neural field model that
describes the neuronal activities of neurons in a large and
heterogeneous domain. More precisely, we consider the coupling of
finitely many local neuron populations, each of which is described by
a stochastic model studied in \cite{li2017well}. This extension is
necessary because the real cerebral cortex consists of numerous
relatively homogeneous local structures, while the neuronal activities in
different local structures can be very different. Take the visual
cortex as an example again. Responses to stimulus orientation are very
different in orientation columns with different orientation
preferences, even if they are spatially close to each other
\cite{kaschube2010universality, hubel1995eye}. Similar as in
\cite{li2017well}, we proved the stochastic stability of the Markov process
generated by the model, which implies that many quantities like the
mean firing rate are well-defined. Then we proposed two exactly
solvable mean-field approximations of our neural field model. The
discrepancy of the mean-field approximations is analyzed. Same as in
the homogeneous population model, the discrepancy of a mean field
model is mainly caused by the emergent coordinated neuronal
activities, and is significantly exacerbated when the neuron spikes
become synchronized.

A more important result presented in this paper is the spatial
correlation of spike counts in the neural field. This is not studied
in our previous paper \cite{li2017well}. We found that the multiple firing
event in nearest-neighbor local populations are highly
correlated. However, this correlation decays quickly with increasing
distance between two local populations. This is consistent with the
experimental observations that the Gamma rhythm is very local
\cite{goddard2012gamma, lee2003synchronous, menon1996spatio}. Two
analytical studies are carried out to investigate this spatial
correlation. We first propose an ODE model that describe the activity
during a multiple firing event. This ODE model shows that the strong
excitatory and inhibitory current during a multiple firing event at
one local population is very likely to induce a similar multiple
firing event in its neighbor local populations. This explains the
mechanism of the spatial correlation. Then we studied the possible
mechanism of the spatial correlation decay. One salient feature of the
multiple firing event is its diversity. When starting from the same
profile, the spike count of a multiple firing event can have very high
volatility, measured by the coefficient of variation. The volatility
significantly decreases when the multiple firing event is close to a
synchronous spiking event. We believe this diversity at least
partially contributes to the quick decay of the spatial correlation. This
is verified by our numerical simulation result, in which the most
synchronous network has the slowest decay of the spatial correlation.

The organization of this paper is as follows. Section 1 is the
introduction. We provide the mathematical description of the neural
field model in Section 2. Section 3 is devoted to the proof of the
stochastic stability and several useful corollaries. Then we provide
five network examples with distinct features in Section 4 for later
investigations. Two mean-field approximations are presented in Section
5. We also analyze their discrepancies in the same section. Section 6
is devoted to the spatial correlation. We demonstrate both phenomena
and mechanism of the spatial correlation between spike counts of
different local populations. Section 7 is the conclusion.


\section{Stochastic model of many interacting neural populations}
We consider a stochastic model that describes the interaction of many
local populations in the cerebral cortex. Each local population
consists of hundreds of interacting excitatory and inhibitory
neuron. The setting of this model is quit generic, but it aims to
describe some of the spiking activities of realistic brain parts. In
particular, one can treat a local population as an orientation column
in the primary visual cortex. We will only prescribe the rules of
external inputs and interactions between neurons. All spatial and temporal correlated
spiking activities in this model are emergent from interactions of
neurons.

\subsection{Model description.} We consider an $M \times N$ array of
local populations of neurons, each of which are homogeneous and densely connected by
local circuitries. In addition, neurons in nearest neighbor of local
populations are connected. Each local population consists of
$N_{E}$ excitatory neurons and $N_{I}$ inhibitory neurons. Similar to
the treatment in \cite{li2017well}, we have the following assumptions in
order to describe the activity of this population by a Markov process.

\begin{itemize}
  \item The membrane potential of each neuron can only take finitely
    many discrete values.
\item The external current to each neuron is in the form of
  independent Poisson processes. The rate of Poisson kicks to all
  neurons of the same type in each local population is a constant.
\item A neuron spikes when its membrane potential reaches a given
  threshold. After a spike, a neuron stays at a refractory state for
  an exponentially distributed amount of time.
\item When an excitatory (resp. inhibitory) spike occurs at a local population, a set of
  postsynaptic neurons from this local population and its nearest
  neighbor local populations are randomly chosen. After an
  exponenitally distributed random time, the membrane
  potential of each chosen postsynaptic neuron goes up (resp. down). 
\end{itemize}

More precisely, we consider $M \times N $ local populations
$\{ L_{m,n}\}$ with $m = 1 , \cdots, M$ and $n = 1, \cdots,
N$. $L_{m,n }$ and $L_{m', n'}$ are considered to be nearest neighbors
if and only if $m = n'$, $| n - n'| = 1$ or $|m - m'| = 1$, $n =
n'$. For each $(m,n)$, denote the set of indices of its nearest
neighbors local populations by $\mathcal{N}(m,n)$. Each population $L_{m,n}$ consists of $N_{E}$ excitatory neurons, labeled
$$
  (m, n, 1), (m, n, 2), \cdots, (m, n, N_{E})
$$
and $N_{I}$ inhibitory neurons, labeled
$$
  (m, n, N_{E}+1), (m, n, N_{E} + 2), \cdots, (m, n, N_{E} + N_{I}) \,.
$$
In other words, each neuron has a unique label $(m, n, k)$. The
membrane potential of neuron $(m, n, k)$, denoted $V_{(m, n, k)}$,
takes value in a finite set
$$
  \Gamma := \{ - M_{r}, - M_{r} + 1, \cdots, -1, 0, 1, 2, \cdots, M \}
  \cup \{\mathcal{R} \} \,,
$$
where $M$ and $M_{r}$ are two integers for the threshold potential and
the inhibitory reversal potential, respectively. When $V_{(m, n, k)}$
reaches $M$, the neuron fires a {\it spike} and reset its membrane
potential to $\mathcal{R}$. After an exponentially distributed amount
of time with mean $\tau_{R}$, $V_{(m, n, k)}$ leaves $\mathcal{R}$ and jumps to $0$.

We first describe the external current that models the input from
other parts of the brain or the sensory input. As in \cite{li2017well}, the
external current received by a neuron is modeled by a homogeneous
Poisson process. The rate of this Poisson process is identical for
the same type of neurons in the same local population. Neurons in
different local population receive different external current, which makes this
model spatially heterogeneous. More precisely, we
assume the rate of such Poisson kick of an excitatory (resp. inhibitory) neuron in local population
$L_{m,n}$ to be $\lambda_{m,n}^{E}$ (resp. $\lambda_{m,n}^{I}$). When a kick is
received by neuron $(m,n, k)$ and it is not at state $\mathcal{R}$,
$V_{(m,n,k)}$ jumps up by $1$ immediately. If it reaches $M$, a spike
is fired. Neurons at state $\mathcal{R}$ do not respond to external
kicks. 

The rule of interactions among neurons is the following. We assume that a
postsynaptic kick from an E (resp. I) neuron takes effect after an exponentially
distributed amount of time with mean $\tau_{E}$ (resp. $\tau_{I}$). To model this
delay effect, we describe the state of neuron $(m, n,
k)$ by a triplet $(V_{(m,n,k)}, H^{E}_{(m,n,k)}, H^{I}_{(m,n,k)})$,
where $H^{E}_{(m, n, k)}$ (resp. $H^{I}_{(m,n,k)}$) denote the number
of received E (resp. I) postsynaptic kicks that has not yet taken
effect. Further, we assume that the delay of postsynaptic kicks are
independent. Therefore, two exponential clocks corresponding to
excitatory and inhibitory kicks are associated to each neuron, with
rates $H^{E}_{(m, n, k)} \tau_{E}^{-1}$ and $H^{I}_{(m,
  n, k)} \tau_{I}^{-1}$ respectively. When the clock corresponding to
excitatory (resp. inhibitory) kicks rings, an excitatory
(resp. inhibitory) kick takes effect according to the rules described in the following paragraph. 

Let $Q, Q'\in \{E, I\}$. When a postsynaptic kick from a neuron of
type $Q$ takes effect at a neuron of
type $Q'$ after the delay time as described above, the membrane potential
of the postsynaptic neuron, say neuron $(m,n,k)$, jumps
instantaneously by a constant $S_{Q', Q}$ if $V_{(m,n,k)} \neq \mathcal{R}$. No
change happens if $V_{(m,n,k)} = \mathcal{R}$. If after the jump we
have $V_{(m,n,k)} \geq M$, neuron $(m,n,k)$ fires a spike and jumps to
state $\mathcal{R}$. Same as in \cite{li2017well}, if constant $S_{Q', Q}$
is not an integer, we let $u$ be a Bernoulli random variable with
$\mathbb{P}[u = 1] = S_{Q', Q} - \left \lfloor S_{Q', Q} \right \rfloor$
that is independent of all other random variables in the model. Then
the magnitude of the postsynaptic jump is set to be the random number
$\left \lfloor S_{Q', Q} \right \rfloor + u$. 

It remains to describe the connectivity within and between local
populations. We assume that each local population is densely
connected and homogeneous, while different local populations are
heterogeneous in a way that the external currents are different. For
example, each local population can be thought as an orientation column
in the primary visual cortex. Hence nearest-neighbor populations
receive very different external drives due to their different
orientational preferences. Same as in \cite{li2017well}, the connectivity in
our model is random and time-dependent. For $Q, Q' \in \{E, I\}$, we
choose two parameters $P_{Q, Q'}, \rho_{Q, Q'} \in [0, 1]$ representing
the local and external connectivity respectively. When a neuron of
type $Q'$ in a local population $L_{m,n}$ fires a spike, every neuron
of type $Q$ in the local population $L_{m,n}$ is postsynaptic with
probability $P_{Q, Q'}$, while every neuron of type $Q$ in the
nearest-neighbor populations $L_{m', n'}$ receives this postsynaptic
kick with probability $\rho_{Q, Q'}$. In other words, neurons of the
same type in the same local population are assumed to be
indistinguishable. 

\subsection{Common parameters for simulations.} Although our
theoretical results are valid for all parameters, in numerical
simulations we will stick to the following set of parameters in order
to be consistent with \cite{li2017well}. Throughout this paper, we assume that $N_{E} = 300$, $N_{I} =
100$ for the size of local populations, $M = 100$, $M_{r} = 66$ for the
thresholds, $P_{EE} = 0.15$, $P_{IE} = P_{EI} = 0.5$ and $P_{II} =
0.4$ for local conductivities. The conductivities to nearest neighbors
are assumed to be proportional to the corresponding local
conductivities. We set two parameters $\mbox{ratio}_{E}$ and
$\mbox{ratio}_{I}$ and let $\rho_{QE} = \mbox{ratio}_{E} P_{QE}$,
$\rho_{QI} = \mbox{ratio}_{I} P_{QI}$ for $Q = I, E$. Further, we
assume that $\mbox{ratio}_{I} = 0.6
\mbox{ratio}_{E}$ as inhibitory neurons are known to be more
``local''. The strengths of
postsynaptic kicks are assumed to be $S_{EE} = 5$, $S_{IE} = 2$,
$S_{EI} = 3$, and $S_{II} = 3.5$. The length
of refractory period is set as $\tau_{\mathcal{R}} = 4
\mbox{ms}$. Since AMPA synapses act faster than GABA synapses, in
general $\tau_{E}$ is assumed to be faster than $\tau_{I}$. Values of
$\tau_{E}$ and $\tau_{I}$ are two changing parameters that are used to
control the degree of synchrony of the network. External drive rates $\lambda_{m,n}^{E}$ and
$\lambda_{m,n}^{I}$ are determined when describing examples with
different spatial structures.

\section{Stochastic stability and proofs}
The aim of this section is to show the stochastic stability of the
model presented in Section 2. As a corollary, we have  well-defined
and computable local and global firing rates, and the spike count correlation between local
populations. 

\subsection{Statement of results.} The neural field model described
above generates a Markov jump process $\Phi_{t}$ on a countable state space
$$
  \mathbf{X} = ( \Gamma \times \mathbb{Z}_{+} \times
  \mathbb{Z}_{+})^{M\times N \times (N_{E} + N_{I})} \,.
$$
The state of neuron $(m, n, k)$ is given by the triplet $(V_{(m,n,k)},
H^{E}_{(m,n,k)}, H^{I}_{(m,n,k)})$, where $V_{(m,n,k)} \in \Gamma$ and
$H^{E}_{i}, H^{I}_{i} \in \mathbb{Z}_{+} := \{ 0, 1, 2, \cdots\}$. The
transition probabilities of $\Phi_{t}$ are denoted by $P^{t}(
\mathbf{x}, \mathbf{y})$, i.e., 
$$
  P^{t}( \mathbf{x}, \mathbf{y}) = \mathbb{P}[ \Phi_{t} = \mathbf{y}
  \,|\, \Phi_{0} = \mathbf{x}] \,.
$$
If $\mu$ is a probability distribution on $\mathbf{X}$, the left
operator of $P^{t}$ acting on $\mu$ is 
$$
  \mu P^{t}( \mathbf{x}) = \sum_{\mathbf{y} \in \mathbf{X}} \mu(
  \mathbf{y}) P^{t}( \mathbf{y}, \mathbf{x}) \,.
$$
Similarly, the right operator of $P^{t}$ acting on a real-valued
function $\eta: \mathbf{X} \rightarrow \mathbb{R}$ is 
$$
  P^{t} \eta( \mathbf{x}) = \sum_{ \mathbf{y} \in \mathbf{X}} P^{t}(
  \mathbf{x}, \mathbf{y}) \eta( \mathbf{y}) \,.
$$
Finally, for any probability measure $\mu$ and real-valued function
$\eta$ on $\mathbf{X}$, we take the convention that
$$
  \mu(\eta) = \sum_{\mathbf{x} \in \mathbf{X}}
  \eta(\mathbf{x})\mu(\mathbf{x}) \,.
$$

For the stochastic stability, we mean the existence, uniqueness, and
ergodicity of the invariant measure for $\Phi_{t}$. Note that
$\mathbf{X}$ has countably infinite states. Hence Markov chains on
$\mathbf{X}$ need not admit an invariant probability measure. 

Define the total number of pending excitatory (resp. inhibitory) kicks
at a state $\mathbf{x} \in \mathbf{X}$ as 
$$
  H^{E}( \mathbf{x}) = \sum_{m = 1}^{M} \sum_{n = 1}^{N} \sum_{k =
    1}^{N_{E} + N_{I}} H^{E}_{(m, n, k)}
$$
and
$$
  H^{I}( \mathbf{x}) = \sum_{m = 1}^{M} \sum_{n = 1}^{N} \sum_{k =
    1}^{N_{E} + N_{I}} H^{I}_{(m, n, k)} \,.
$$
Further we let $U( \mathbf{x}) = H^{E}( \mathbf{x}) +
H^{I}(\mathbf{x}) + 1$. For any signed measure on the Borel
$\sigma$-algebra of $\mathbf{X}$, denoted by $\mathcal{B}(X)$, we
define the $U$-weighted total variation norm to be
$$
  \| \mu \|_{U} = \sum_{ \mathbf{x} \in \mathbf{X}} U( \mathbf{x})
  |\mu( \mathbf{x}) | \,,
$$
and let 
$$
  L_{U}( \mathbf{X}) = \{ \mu \mbox{ on } \mathbf{X} \, | \, \| \mu
  \|_{U} < \infty \} \,.
$$
In addition, for any measurable function $\eta( \mathbf{x})$ on
$\mathbf{X}$, we let
$$
  \sup_{ \mathbf{x} \in \mathbf{X}} \frac{|\eta( \mathbf{x})|}{U(\mathbf{x})}
$$
be the $U$-weighted supreme norm. 

\begin{thm}
\label{invariant}
$\Phi_{t}$ admits a unique invariant probability measure $\pi \in
L_{U}( \mathbf{X})$. In addition, there exist constants $C_{1}$,
$C_{2} > 0$ and $r \in (0, 1)$ such that
\begin{itemize}
  \item (a) for any initial distribution $\mu \in L_{U}( \mathbf{X})$,
$$
  \| \mu P^{t} - \pi \|_{U} \leq C_{1} r^{t} \| \mu - \pi \|_{U} \,;
$$
\item (b) for any measurable function $\eta$ with $\| \eta \|_{U} <
  \infty$, 
$$
  \| P^{t} \eta - \pi (\eta) \|_{U} \leq C_{2} r^{t} \| \eta - \pi
  (\eta) \|_{U} \,.
$$
\end{itemize}
\end{thm}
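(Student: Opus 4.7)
The plan is to apply the standard Foster--Lyapunov (Meyn--Tweedie) framework for geometric $V$-uniform ergodicity of a continuous-time Markov jump process on a countable state space. The two ingredients I would verify are: (i) $\Phi_{t}$ is $\psi$-irreducible and aperiodic with a petite small set; and (ii) a geometric drift inequality
\begin{equation*}
  \mathcal{A} V(\mathbf{x}) \leq -\alpha V(\mathbf{x}) + b \mathbf{1}_{C}(\mathbf{x})
\end{equation*}
for some function $V$ comparable to $U$ (so that $c_{1} U \leq V \leq c_{2} U$), a finite set $C$, and constants $\alpha,b > 0$. Granted these, the $V$-uniform ergodicity theorem produces the unique invariant measure $\pi \in L_{U}(\mathbf{X})$ together with exponential contraction in the $U$-weighted norm. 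Statements (a) and (b) are dual under the pairing $\mu(\eta) = \sum_{\mathbf{x}} \mu(\mathbf{x})\eta(\mathbf{x})$ and the identity $\|\mu\|_{U} = \sup\{ |\mu(\eta)| : \|\eta\|_{U} \leq 1\}$, so they come as a package.

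For the first ingredient I would fix the reference state $\mathbf{x}_{\ast}$ in which every neuron has $V = 0$ and $H^{E}=H^{I}=0$, and prove $P^{t_{0}}(\mathbf{x}, \mathbf{x}_{\ast}) > 0$ uniformly on each sub-level set $\{U \leq K\}$ by an explicit path construction: first wait for all pending kicks to clear via the $H^{E}/\tau_{E}$ and $H^{I}/\tau_{I}$ clocks (any intermediate spikes add only a bounded amount to $U$); then drive every non-refractory neuron across threshold using the external Poisson input so it enters $\mathcal{R}$; finally let the refractory clocks expire. Every step has strictly positive probability because $\Gamma$ is finite and the Poisson rates and targeting probabilities $P_{QQ'}, \rho_{QQ'}$ are strictly positive. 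Aperiodicity follows by letting the hold times vary over a small interval. Consequently $\{\mathbf{x}_{\ast}\}$ is an accessible atom, and every finite set $\{U \leq K\}$ is petite.

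The heart of the proof, and the main obstacle, is the drift inequality. Pending kicks are consumed at rate $H^{E}/\tau_{E} + H^{I}/\tau_{I} \geq (U-1)/\tau_{\max}$, giving drift $-(U-1)/\tau_{\max}$ on $U$ itself. Each spike, however, creates up to $\kappa := (N_{E}+N_{I})(1+\max_{m,n}|\mathcal{N}(m,n)|)$ new pending kicks, and because a pending excitatory kick arriving at a near-threshold neuron can itself trigger a spike, the naive bound $\mathcal{A}U \leq (\kappa-1)(U-1)/\tau_{\min} + \kappa\Lambda_{\mathrm{ext}}$ has the wrong sign. To close this feedback loop I would enlarge $U$ by a bounded correction $W(\mathbf{x})$ that weights the ``available headroom'' of every non-refractory neuron together with the state of its refractory clock, chosen so that every spike removes from $W$ at least as much as the $+\kappa$ it adds to the pending-kick count, while every non-spiking transition contributes to $W$ at most what has already been extracted from $U$. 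Since $W$ is uniformly bounded by a constant multiple of $MN(N_{E}+N_{I})$, the function $V := U + W$ is comparable to $U$, and one obtains $\mathcal{A}V \leq -\alpha V + \beta$ globally; taking $C$ to be a sufficiently large sub-level set of $V$ then produces the required drift inequality. Calibrating the headroom weights so that excitatory kicks, inhibitory kicks, refractory recovery and spikes all cooperate to give a strictly negative coefficient in front of $V$, uniformly in the model parameters, is the delicate balancing step; this is where the proof extends the argument of \cite{li2017well} from a single population to the full $M \times N$ grid, with the nearest-neighbor coupling absorbed into the constant $\kappa$ and handled by linearity across local populations.
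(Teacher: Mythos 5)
Your overall framework (Harris-type theorem: geometric drift plus a minorization on sublevel sets of the Lyapunov function) is the same as the paper's, and your small-set/irreducibility construction is essentially the paper's Lemma \ref{cond2}, modulo one ordering issue: you propose to first let all pending kicks clear and then drive neurons to threshold, but clearing a pending excitatory kick at a near-threshold neuron forces a spike, which creates new pending kicks, so ``wait for all pending kicks to clear'' is not a single bounded-probability event. The paper avoids this by reversing the order: first drive every neuron into $\mathcal{R}$ with external kicks, then let the old pending kicks fire harmlessly against refractory neurons.

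The genuine gap is in the drift inequality, which you yourself flag as ``the delicate balancing step'' without carrying it out. You correctly observe that the instantaneous generator applied to $U$ has the wrong sign (a consumed excitatory kick can trigger a spike that injects $\kappa\approx 5(N_E+N_I)$ new pending kicks), but the fix you sketch --- a bounded ``headroom'' correction $W$ --- cannot be calibrated if the headroom is potential-based. The two requirements are incompatible: a non-spiking excitatory kick trades $-1$ in $U$ for $+cS_{EE}$ in $W$, forcing $c\le 1/S_{EE}$, while a spike must pay back $+\kappa$ in $U$ out of at most $cM$ of headroom, forcing $cM\ge\kappa$; with the model's parameters ($M=100$, $S_{EE}=5$, $\kappa\approx 2000$) these cannot hold simultaneously. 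The mechanism that actually controls the spike rate is the refractory period, not the membrane-potential headroom: after each spike a neuron dwells in $\mathcal{R}$ for an Exp$(\tau_{\mathcal{R}})$ time, so over a window of length $h$ each neuron spikes at most $1+h/\tau_{\mathcal{R}}$ times in expectation, \emph{uniformly in the state}. The paper exploits exactly this by passing to the time-$h$ sampled chain, where $\mathbb{E}_{\mathbf{x}}[N_{in}]\le MN(N_E+N_I)(1+h/\tau_{\mathcal{R}})$ is a state-independent constant and $U$ itself satisfies $P^hU\le\gamma U+K$ with no correction term (Lemma \ref{cond1}); invariance and exponential convergence for all real $t$ are then recovered by a continuity-at-zero argument. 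A continuous-time version of your argument can be rescued by weighting the \emph{refractory indicator} (not the headroom) with a constant exceeding $\kappa$, but as written your $W$ does not close the feedback loop, and this is the heart of the theorem rather than a routine calibration.
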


Theorem \ref{invariant} guarantees that the local/global firing rate
and the spike count correlation between local populations are well
defined. Let $L_{m,n}$ be a given local population. For $Q \in \{ E, I
\}$, let $N^{Q}_{(m,n)}([a, b])$ be the number of 
neuron spikes fired by type $Q$ neurons in $L_{m,n}$ on the time interval $[a, b]$. As discussed in
\cite{li2017well}, the mean firing rate of the local population $(m,n)$ is
defined to be 
$$
  F^{Q}_{m,n} = \frac{1}{T}\mathbb{E}_{\pi}[ N^{Q}_{(m,n)}([0, T])] \,,
$$
where $\mathbb{E}_{\pi}$ is the expectation with respect to the
invariant probability measure $\pi$. This definition is independent of
$T$ by the invariance of $\pi$. 

Let $T$ be a fixed time window, we can further define the covariance of spike count
between $Q_{1}$-population in $L_{m,n}$ and $Q_{2}$-population in
$L_{m', n'}$ as
$$
  \mbox{cov}_{T}^{Q_{1}, Q_{2}}(m,n; m', n') =
  \mathbb{E}_{\pi}[N^{Q_{1}}_{(m,n)}([0, T]) N^{Q_{2}}_{(m',n')}([0, T]) ] -
  T^{2}N_{Q_{1}}N_{Q_{2}} F^{Q_{1}}_{m,n} F^{Q_{2}}_{m',n'}\,.
$$

The Pearson correlation coefficient of spike count can be defined
similarly. For $Q_{1}$-population in $L_{m,n}$ and $Q_{2}$-population in
$L_{m', n'}$, we have
$$
  \rho^{Q_{1}, Q_{2}}_{T}(m,n,m',n') = \frac{\mbox{cov}_{T}^{Q_{1},
      Q_{2}}(m,n,m',n')}{\sigma^{Q_{1}}_{T}(m,n)
   \cancel{ \sigma^{Q_{2}}_{T}(m,n)} \sigma^{Q_{2}}_{T}(m',n')}\,,
$$
where
$$
  \sigma^{Q}_{T}(m,n) = \sqrt{\mbox{var}_{\pi}(N^{Q}_{(m,n)}([0, T]))}
$$
for $Q \in \{E, I\}$. 

One can also consider the correlation of the total spike count between
two local populations. Let $N_{(m,n)}([0, T])$ be the number of
excitatory and inhibitory spikes produced by $L_{m,n}$ on $[0, T]$
when starting from the steady state. Then the correlation
$\mbox{cov}_{T}(m,n,m',n')$ and the Pearson correlation coefficient
$\rho_{T}(m,n,m',n') $ can be defined analogously.

The following corollaries implies that the mean firing rate and the spike
count correlation are computable.

\begin{cor}
\label{cor1}
For $Q \in \{E, I\}$, $1 \leq m \leq M$, and $1 \leq n \leq N$, the
local firing rate $F^{Q}_{m,n} < \infty$. In addition, for any initial
value $\mathbf{x} \in X$,
$$
\lim_{T \rightarrow \infty} \frac{N^{Q}_{(m,n)}([0, T])}{N_{Q}T}  = F^{Q}_{(m,n)}
$$
almost surely. 
\end{cor}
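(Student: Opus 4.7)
The plan is to write $N^Q_{(m,n)}([0,T])$ as a stochastic intensity integral plus a martingale remainder, bound the intensity by the Lyapunov-type function $U$, and then invoke Theorem \ref{invariant} together with the strong law of large numbers for ergodic Markov jump processes.

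First, to see that $F^Q_{m,n}$ is finite, I would identify the aggregate instantaneous rate $g(\mathbf{x})$ at which type-$Q$ neurons in $L_{m,n}$ fire spikes while $\Phi_t = \mathbf{x}$. A spike is triggered only by an external Poisson kick or by a pending postsynaptic kick taking effect, so
$$
g(\mathbf{x}) \;\leq\; N_Q\lambda_{m,n}^Q \;+\; \tau_E^{-1}H^E(\mathbf{x}) \;+\; \tau_I^{-1}H^I(\mathbf{x}) \;\leq\; C\, U(\mathbf{x})
$$
for some constant $C>0$. Theorem \ref{invariant} gives $\pi \in L_U(\mathbf{X})$, hence $\pi(g) \leq C\pi(U) < \infty$. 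By the L\'evy system associated to the jump process, $N^Q_{(m,n)}([0,T]) - \int_0^T g(\Phi_s)\,ds$ is a martingale; taking expectations under $\pi$ and using invariance yields $T\cdot F^Q_{m,n} = T\cdot \pi(g)$, so $F^Q_{m,n} < \infty$.

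For the almost-sure convergence, I would fix an arbitrary initial state $\mathbf{x}$ and write
$$
N^Q_{(m,n)}([0,T]) \;=\; \int_0^T g(\Phi_s)\,ds \;+\; M_T,
$$
where $M_T$ is a square-integrable pure-jump martingale with predictable quadratic variation $\langle M \rangle_T = \int_0^T g(\Phi_s)\,ds$. Because $\delta_\mathbf{x} \in L_U(\mathbf{X})$ with $\|\delta_\mathbf{x}\|_U = U(\mathbf{x})$, part (a) of Theorem \ref{invariant} applies and gives geometric ergodicity from every initial point; in particular $\Phi_t$ is positive Harris recurrent. The standard strong law of large numbers for additive functionals of positive Harris recurrent Markov processes, applied to the $\pi$-integrable function $g$, then yields
$$
\lim_{T \to \infty} \frac{1}{T}\int_0^T g(\Phi_s)\,ds \;=\; \pi(g) \;=\; F^Q_{m,n}
$$
almost surely from $\mathbf{x}$. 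For the martingale remainder, part (b) of Theorem \ref{invariant} shows $\mathbb{E}_\mathbf{x}[g(\Phi_s)]$ converges geometrically to $\pi(g)$, so $\mathbb{E}_\mathbf{x}[\langle M \rangle_T] = O(T)$; Doob's $L^2$ maximal inequality along integer times combined with Borel-Cantelli then gives $M_T/T \to 0$ almost surely. Dividing by $N_Q$ delivers the stated limit.

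The main obstacle is transferring the ergodic-theoretic conclusions from $\pi$-a.e. starting point to every initial state $\mathbf{x}$. This is exactly what the $U$-weighted geometric ergodicity in Theorem \ref{invariant} is designed for: every Dirac mass lies in $L_U(\mathbf{X})$, the underlying drift-minorization structure forces Harris recurrence on $U$-sublevel sets, and the a.s.\ SLLN for additive functionals then holds from every $\mathbf{x}$. The remaining pieces (the L\'evy system identification of $g$, the dominated-convergence step to pass from the $\pi$-expectation to the invariant rate, and the $L^2$ martingale estimate) are routine once the $U$-domination $g \leq CU$ is in hand.
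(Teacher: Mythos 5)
Your argument is correct in outline but takes a genuinely different route from the paper's. The paper avoids stochastic intensities entirely and instead exploits the refractory state: finiteness of $F^{Q}_{m,n}$ follows from the uniform pathwise bound that each neuron's spike count on $[0,1]$ is dominated by $1+\mathrm{Pois}(1/\tau_{\mathcal R})$ (every spike must be followed by an $\mathrm{Exp}(\tau_{\mathcal R})$ sojourn at $\mathcal R$), and the almost-sure limit is obtained by applying the ergodic theorem to the \emph{bounded} observable $\xi(\mathbf x)=\#\{k : V_{(m,n,k)}=\mathcal R\}$ and then using the i.i.d.\ law of large numbers for the exponential refractory durations to show $\int_0^T\xi(\Phi_t)\,\mathrm{d}t\big/N^{Q}_{(m,n)}([0,T])\to\tau_{\mathcal R}$, together with the identity $\pi(\xi)=N_Q\tau_{\mathcal R}F^{Q}_{m,n}$. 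That route needs only the ergodic theorem for a bounded function, but it leans on $\tau_{\mathcal R}>0$. Your route --- compensator decomposition $N^{Q}_{(m,n)}([0,T])=\int_0^T g(\Phi_s)\,\mathrm{d}s+M_T$, the domination $g\le CU$ with $\pi(U)<\infty$, the SLLN for $\pi$-integrable additive functionals of a Harris recurrent process, and martingale control --- uses heavier machinery but is robust to the refractory structure and yields the intensity representation of the firing rate as a by-product. (Both your write-up and the paper are loose about the factor $N_Q$: your two displayed identifications give $F^{Q}_{m,n}=\pi(g)$ and a limit of $\pi(g)/N_Q$, which cannot both equal $F^{Q}_{m,n}$; this bookkeeping should be fixed once, consistently with the definition in Section 3.1.)

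One step needs repair. Doob's maximal inequality plus Chebyshev gives $\mathbb P\bigl(\sup_{t\le n}|M_t|>\epsilon n\bigr)=O(1/n)$, which is not summable, so Borel--Cantelli along integer times does not close as written. Either pass to the subsequence $n_k=k^2$ and use monotonicity of $t\mapsto N^{Q}_{(m,n)}([0,t])$ and of $t\mapsto\int_0^t g\,\mathrm{d}s$ to interpolate, or apply the martingale-difference SLLN (Theorem \ref{mds} of the paper) to the increments $M_n-M_{n-1}$, whose second moments are $O(1)$ by the bound $\mathbb E_{\mathbf x}[\langle M\rangle_T]=O(T)$ you already derived; an additional Borel--Cantelli estimate on $\sup_{t\in[n,n+1]}|M_t-M_n|$, which is $O(1/n^2)$ after Chebyshev, then upgrades the integer-time limit to continuous time.
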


\begin{cor}
\label{cor2}
For $Q_{1}, Q_{2} \in \{E, I\}$, $1 \leq m, m' \leq M$, and $1 \leq n,
n' \leq N$, the covariance $\mbox{cov}_{T}^{Q_{1}, Q_{2}}(m,n; m', n')
< \infty$. In addition, for any initial value $\mathbf{x} \in X$, 
$$
 \lim_{K \rightarrow \infty} \frac{1}{K} \sum_{ k = 0}^{K-1}
  N^{Q_{1}}_{(m,n)}([kT, (k+1)T)) N^{Q_{2}}_{(m',n')}([kT, (k+1)T)) =
  \mathbb{E}_{\pi}[N^{Q_{1}}_{(m,n)}([0, T]) N^{Q_{2}}_{(m',n')}([0,
  T])] 
$$
almost surely. In other words $\mbox{cov}_{T}^{Q_{1}, Q_{2}}(m,n; m',
n')$ is computable. 
\end{cor}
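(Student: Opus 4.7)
The plan is to deduce Corollary \ref{cor2} from Theorem \ref{invariant} in two stages: first establish that $\mathbb{E}_{\pi}[N^{Q_1}_{(m,n)}([0,T]) N^{Q_2}_{(m',n')}([0,T])]$ is finite, which immediately gives $\mbox{cov}_T^{Q_1,Q_2}(m,n;m',n')<\infty$, and then upgrade the $U$-weighted ergodicity of $\Phi_t$ to an almost sure strong law for the product observable.

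For the finiteness step, I would bound the local spike count by functionals of the trajectory. Each spike in $L_{m,n}$ is triggered either by an external Poisson kick on one of its neurons or by a pending synaptic kick taking effect, so $N^Q_{(m,n)}([0,T])$ is dominated by the external Poisson arrivals into $L_{m,n}$ on $[0,T]$ (whose second moment is $O(T^2)$) plus the total number of pending kicks processed during $[0,T]$. The latter quantity is controlled by $U(\Phi_0)+\int_0^T U(\Phi_s)\,ds$ up to multiplicative constants coming from the bounded fan-out of synaptic connections within a population and to its nearest neighbors. The geometric drift inequality underlying Theorem \ref{invariant} yields $\mathbb{E}_{\mathbf{x}}\bigl[\int_0^T U(\Phi_s)\,ds\bigr]\le C(U(\mathbf{x})+T)$, and by upgrading the same Foster--Lyapunov argument to the weight $U^2$ we obtain $\pi(U^2)<\infty$ and then $\mathbb{E}_{\pi}[(N^Q_{(m,n)}([0,T]))^2]<\infty$. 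Cauchy--Schwarz produces the required finiteness of the cross moment.

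For the almost sure statement, I would discretize time at step $T$ and work with the sampled chain $\tilde\Phi_k:=\Phi_{kT}$. By Theorem \ref{invariant}, the kernel $P^T$ is geometrically ergodic in the $U$-weighted norm, so $\tilde\Phi_k$ is positive Harris recurrent with unique invariant measure $\pi$. Set $f(\mathbf{x}):=\mathbb{E}_{\mathbf{x}}[N^{Q_1}_{(m,n)}([0,T])N^{Q_2}_{(m',n')}([0,T])]$ and $Y_k:=N^{Q_1}_{(m,n)}([kT,(k+1)T))N^{Q_2}_{(m',n')}([kT,(k+1)T))$. By the strong Markov property, $Y_k=g(\tilde\Phi_k,\xi_k)$ where $\xi_k$ is the innovation driving the process on $[kT,(k+1)T]$, so $(\tilde\Phi_k,Y_k)$ is realized as a functional of a Harris recurrent chain. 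The Markov chain strong law of large numbers, applicable because $\mathbb{E}_\pi[|Y_0|]=\pi(f)<\infty$ from the first step, then yields $\frac{1}{K}\sum_{k=0}^{K-1}Y_k\to\pi(f)=\mathbb{E}_\pi[N^{Q_1}_{(m,n)}([0,T])N^{Q_2}_{(m',n')}([0,T])]$ almost surely for every initial state $\mathbf{x}\in\mathbf{X}$, which is exactly the claimed convergence.

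The main obstacle is the second moment estimate on the spike counts. Theorem \ref{invariant} only asserts $\|\pi\|_U<\infty$, so the substantive piece is the upgrade of the Foster--Lyapunov drift from $U$ to $U^2$ (or to any auxiliary weight that dominates the square of the pending-kick count), together with the trajectory bound that expresses the spike count in terms of external Poisson input and integrated $U$. Everything else is a routine consequence of the geometric ergodicity already in hand: once $\pi(U^2)<\infty$ and the domination bound are established, the ergodic theorem and the strong Markov property glue the argument together cleanly.
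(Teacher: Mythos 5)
Your overall architecture (finiteness of the cross moment, then an almost sure ergodic average for the sampled chain) is sound, but the crux of the corollary --- the moment bound on the spike counts --- is exactly the piece you defer. You propose to dominate $N^{Q}_{(m,n)}([0,T])$ by external Poisson input plus the pending kicks processed on $[0,T]$, control the latter by $U(\Phi_{0})+\int_{0}^{T}U(\Phi_{s})\,ds$, and then upgrade the Foster--Lyapunov drift from $U$ to $U^{2}$ to get $\pi(U^{2})<\infty$. None of this is carried out, and it is not routine: the domination is circular (spikes create pending kicks which create spikes, so the constant in front of $\int_{0}^{T}U$ is not obviously finite without a separate argument), and a second-moment version of the trajectory bound requires controlling $\mathbb{E}_{\mathbf{x}}\bigl[\bigl(\int_{0}^{T}U(\Phi_{s})\,ds\bigr)^{2}\bigr]$, not just its first moment. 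More importantly, this whole detour is unnecessary. The model has a refractory state: after each spike a neuron must sit at $\mathcal{R}$ for an independent $\mbox{Exp}(\tau_{\mathcal{R}})$ time, so the number of spikes a single neuron can fire on a window of length $T$ is stochastically dominated by $1+\mbox{Pois}(T/\tau_{\mathcal{R}})$, \emph{uniformly in the state of the network}. Hence $N^{Q}_{(m,n)}([0,T])$ has uniformly bounded moments of all orders (the paper uses the fourth moment), and finiteness of $\mathbb{E}_{\pi}[N^{Q_{1}}N^{Q_{2}}]$ follows from Cauchy--Schwarz with no weighted-norm estimates at all. This is the same observation already used in Corollary \ref{cor1}; missing it leaves your proof resting on an unproved $U^{2}$ drift condition.

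For the almost sure statement your route differs from the paper's but is legitimate in principle: you invoke the strong law for path functionals of a positive Harris recurrent chain, treating $Y_{k}$ as a function of $(\tilde\Phi_{k},\xi_{k})$, which requires only $\mathbb{E}_{\pi}[|Y_{0}|]<\infty$ and a regeneration-type argument to get the result from every initial state. The paper instead splits $Y_{k}=Z_{k}+(Y_{k}-Z_{k})$ with $Z_{k}=\mathbb{E}_{\Phi_{kT}}[Y_{k}]$, applies the ergodic theorem to the genuine observable $Z_{k}$, and kills the remainder with the martingale-difference strong law (Theorem \ref{mds}), which needs the uniform fourth-moment bound above. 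Either works once the moments are in hand, but the paper's decomposition is the more elementary and self-contained of the two; if you keep your Harris-chain formulation you should cite or prove the LLN for path functionals from arbitrary initial conditions rather than treat it as standard.
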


\begin{cor}
\label{cor3}
For $1 \leq m, m' \leq M$, and $1 \leq n,
n' \leq N$, the covariance $\mbox{cov}_{T}(m,n; m', n')
< \infty$. In addition, for any initial value $\mathbf{x} \in X$, 
$$
  \lim_{K \rightarrow \infty} \frac{1}{K}\sum_{ k = 0}^{K-1}
  N_{(m,n)}([kT, (k+1)T]) N_{(m',n')}([kT, (k+1)T)) =
  \mathbb{E}_{\pi}[N_{(m,n)}([0, T]) N_{(m',n')}([0,
  T])] 
$$
almost surely. In other words $\mbox{cov}_{T}(m,n; m', n')$ is
computable. 
\end{cor}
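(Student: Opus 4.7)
The plan is to reduce Corollary \ref{cor3} directly to Corollary \ref{cor2} by decomposing the total spike count into its excitatory and inhibitory components. By definition, for every time window $[a,b]$ and every local population $(m,n)$ we have
$$
N_{(m,n)}([a,b]) = N^{E}_{(m,n)}([a,b]) + N^{I}_{(m,n)}([a,b]),
$$
so expanding the product gives
$$
N_{(m,n)}([0,T]) N_{(m',n')}([0,T]) = \sum_{Q_1, Q_2 \in \{E,I\}} N^{Q_1}_{(m,n)}([0,T]) N^{Q_2}_{(m',n')}([0,T]).
$$
This identity holds pathwise (the event of an excitatory and an inhibitory spike at the same instant has probability zero), and the same identity holds on each window $[kT,(k+1)T)$.

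First, I would establish the finiteness statement. Taking $\mathbb{E}_\pi$ on both sides of the above identity and applying Corollary \ref{cor2} to each of the four summands shows that $\mathbb{E}_\pi[N_{(m,n)}([0,T]) N_{(m',n')}([0,T])] < \infty$. Since the product of the marginal means $\mathbb{E}_\pi[N_{(m,n)}([0,T])] \mathbb{E}_\pi[N_{(m',n')}([0,T])]$ is also finite (each marginal is dominated by $T(N_E F^{E}_{\cdot} + N_I F^{I}_{\cdot}) < \infty$ by Corollary \ref{cor1}), $\mbox{cov}_T(m,n;m',n')$ is a well-defined finite quantity; indeed it is the sum of the four finite covariances $\mbox{cov}_T^{Q_1,Q_2}(m,n;m',n')$ provided by Corollary \ref{cor2}.

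Next, for the almost-sure computability, I would apply the decomposition window-by-window and take a Cesàro average:
$$
\frac{1}{K}\sum_{k=0}^{K-1} N_{(m,n)}([kT,(k+1)T)) N_{(m',n')}([kT,(k+1)T)) = \sum_{Q_1,Q_2} \frac{1}{K}\sum_{k=0}^{K-1} N^{Q_1}_{(m,n)}([kT,(k+1)T)) N^{Q_2}_{(m',n')}([kT,(k+1)T)).
$$
Corollary \ref{cor2} says that for every initial state $\mathbf{x}\in\mathbf{X}$ each of the four inner averages converges almost surely to $\mathbb{E}_\pi[N^{Q_1}_{(m,n)}([0,T]) N^{Q_2}_{(m',n')}([0,T])]$. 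The intersection of four almost-sure events is almost sure, so the outer sum converges a.s. to the expectation of the corresponding sum, which, by the first paragraph, equals $\mathbb{E}_\pi[N_{(m,n)}([0,T]) N_{(m',n')}([0,T])]$.

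There is essentially no obstacle beyond bookkeeping: everything analytic (the drift/ergodicity supplying the $U$-weighted bounds and the a.s. ergodic theorem) has already been absorbed into Theorem \ref{invariant} and Corollary \ref{cor2}. The only point that merits a line is the pathwise decomposition $N=N^E+N^I$, together with the elementary fact that a finite sum of sequences converging almost surely converges almost surely to the sum of the limits.
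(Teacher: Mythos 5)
Your argument is correct, but it takes a different (and arguably more economical) route than the paper. The paper simply states that the proof of Corollary \ref{cor3} is identical to that of Corollary \ref{cor2}: one reruns the martingale-difference decomposition $Y_k = Z_k + Y_k'$ with $Y_k = N_{(m,n)}([kT,(k+1)T))\,N_{(m',n')}([kT,(k+1)T))$ built from the \emph{total} counts, re-establishes the fourth-moment bounds via the Poisson domination $N_{(m,n)}([0,T]) \leq (N_E+N_I)(1+\mathrm{Pois}(T/\tau_{\mathcal R}))$, and invokes Theorem \ref{mds} together with the ergodic theorem once more. You instead use the pathwise identity $N_{(m,n)} = N^{E}_{(m,n)} + N^{I}_{(m,n)}$ (which is purely a matter of counting by type, so your remark about simultaneous spikes having probability zero is unnecessary), expand the product into the four cross terms, and deduce both the finiteness and the almost-sure Ces\`aro convergence directly from the four instances of Corollary \ref{cor2}. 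This reduction is logically sound and avoids repeating the moment estimates; what the paper's approach buys is that it does not depend on the algebraic relation between $\mbox{cov}_{T}$ and the $\mbox{cov}_{T}^{Q_1,Q_2}$, which, with the paper's normalization involving the factors $N_{Q_1}N_{Q_2}$ in the definition of $\mbox{cov}_{T}^{Q_1,Q_2}$, does not literally make $\mbox{cov}_{T}$ the sum of the four typed covariances. That side remark in your write-up should be dropped or adjusted, but it does not affect the two claims actually asserted in the corollary, both of which you prove correctly.
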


\subsection{Probabilistic Preliminaries} Let $\Psi_{n}$ be a Markov
chain on a countable state space $(X, \mathcal{B})$  
with transition kernels $\mathcal{P}(x, \cdot)$. Let $W: X
\rightarrow [1, \infty)$ be a real-valued function. The following general results
on geometric ergodicity is well known.

Assume $\Psi_{n}$ satisfies the following conditions.

\begin{itemize}
  \item[(a)] There exist constants $K \geq 0$ and $\gamma \in (0, 1)$
    such that
$$
  (\mathcal{P}W)(x) \leq \gamma W(x) + K
$$ 
for all $x \in X$.
\item[(b)] There exists a constant $\alpha \in (0, 1)$ and a
  probability distribution $\nu$ on $\mathbf{X}$ so that
$$
  \inf_{x\in C} \mathcal{P}(x, \cdot) \geq \alpha \nu(\cdot) \,,
$$
with $C = \{x \in X \, | \, W(x) \leq R \}$ for some $R > 2K/ (1 -
\gamma)$, where $K $ and $\gamma$ are from (a).
\end{itemize}

\medskip

The following result was proved in \cite{meyn2009markov} and
\cite{hairer2011yet} using different methods. Note that the original
result in \cite{meyn2009markov, hairer2011yet} is for a generic
measurable state space. The result applies to countable state space
with the Borel $\sigma$-algebra (which is essentially the discrete
$\sigma$-algebra). 

\begin{thm}
\label{hairer}
Assume (a) and (b). Then $\Psi_{n}$ admits a unique invariant measure
$\pi \in L_{W}(X)$. In addition, there exist constants $C, C' > 0$ and
$r \in (0, 1)$ such that (ii) for all $\mu, \nu \in L_{W}(X)$, 
$$
  \| \mu \mathcal{P}^{n} - \nu \mathcal{P}^{n} \|_{W} \leq C r^{n} \|
  \mu - \nu \|_{W} \,,
$$ 
and (i) for all $\xi$ with $\|\xi\|_{W} < \infty$,
$$
  \| \mathcal{P}^{n} \xi - \pi( \xi) \|_{W} \leq C' r^{n} \| \xi -
  \pi(\xi) \|_{W} \,.
$$
\end{thm}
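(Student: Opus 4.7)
The plan is to follow the semigroup-contraction approach to Harris' theorem of Hairer-Mattingly. The central idea is to introduce a modified norm on signed measures that fuses the total-variation and weighted pieces, and show that $\mathcal{P}$ becomes a strict contraction in it after a single step. For a parameter $\beta > 0$ to be chosen, I would define
$$\| \mu \|_\beta := \sum_{x \in X}(1 + \beta W(x)) |\mu(x)|.$$
Since $W \geq 1$, $\| \cdot \|_\beta$ and $\| \cdot \|_W$ are equivalent on $L_W(X)$ for every fixed $\beta > 0$, so any geometric contraction in $\| \cdot \|_\beta$ transfers immediately to $\| \cdot \|_W$ after absorbing the ratio into $C$.

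The key step is to establish the one-step contraction
$$\| \mu \mathcal{P} - \nu \mathcal{P} \|_\beta \leq r \,\| \mu - \nu \|_\beta \qquad (\star)$$
for some $r \in (0,1)$ uniformly in probability measures $\mu, \nu \in L_W(X)$. I would reduce to Dirac differences and split into two regimes. If $W(x) + W(y) > R$ (so $(x,y) \notin C \times C$), the drift condition (a) gives $\mathcal{P}W(x) + \mathcal{P}W(y) \leq \gamma(W(x) + W(y)) + 2K$; because $R > 2K/(1-\gamma)$, the weighted part strictly dominates the affine remainder $1 + 2\beta K$ for $\beta$ small, producing a contraction factor $r_1 < 1$. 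If instead $(x,y) \in C \times C$, the minorization (b) yields a Markov coupling of $\mathcal{P}(x,\cdot)$ and $\mathcal{P}(y,\cdot)$ putting mass at least $\alpha$ on the diagonal, so $\| \mathcal{P}(x,\cdot) - \mathcal{P}(y,\cdot) \|_{\mathrm{TV}} \leq 2(1-\alpha)$; since $W$ is bounded by $R$ on $C$ and $\mathcal{P}W \leq \gamma R + K$ after one step, the weighted contribution is at most $O(\beta)$, giving a factor $r_2 \approx 1 - \alpha + O(\beta) < 1$ once $\beta$ is small. Setting $r := \max(r_1, r_2)$ completes $(\star)$.

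Iterating $(\star)$ shows $\{\mu \mathcal{P}^n\}_n$ is Cauchy in the Banach space $(L_W(X), \| \cdot \|_\beta)$ with a limit independent of $\mu$; this limit is fixed by $\mathcal{P}$, giving the unique invariant $\pi \in L_W(X)$, and setting $\nu = \pi$ yields part (ii). Part (i) then follows by duality: applying (ii) to $\delta_x$ and $\pi$ gives $\| \delta_x \mathcal{P}^n - \pi \|_W \leq C r^n (W(x) + \pi(W))$, and pairing with $\xi - \pi(\xi)$ produces
$$\frac{|\mathcal{P}^n \xi(x) - \pi(\xi)|}{W(x)} \leq C(1 + \pi(W))\, r^n \sup_{y} \frac{|\xi(y) - \pi(\xi)|}{W(y)},$$
which is (i) with $C' = C(1 + \pi(W))$. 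The main obstacle is the calibration of $\beta$ in $(\star)$: it must be small enough that the $O(\beta)$ correction in the small-set case leaves $r_2 < 1$, yet large enough (really: compatible with the drift case) that the strict inequality $R > 2K/(1-\gamma)$ is exploited to push $r_1$ below $1$. This balancing is the only delicate part of the argument; once $(\star)$ is secured the remaining fixed-point and duality steps are routine.
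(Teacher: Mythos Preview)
The paper does not prove this theorem at all: it is stated as a known result and attributed to \cite{meyn2009markov} and \cite{hairer2011yet}. Your proposal is a faithful sketch of the Hairer--Mattingly argument from \cite{hairer2011yet}, and the outline is correct, including the reduction to Dirac differences, the two-regime split governed by whether $(x,y)\in C\times C$, and the role of the strict inequality $R>2K/(1-\gamma)$ in pushing the drift-regime factor below $1$. Since the paper simply cites this result, there is no in-paper proof to compare against; your approach matches one of the two references the authors invoke.
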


We also need the following law of large numbers for martingale
difference sequence to prove the corollary. 

\begin{thm}[Theorem 3.3.1 of \cite{stout1974almost}]
\label{mds}
Let $X_{n}$ be a martingale difference sequence with respect to
$\mathcal{F}_{n}$. If 
$$
  \sum_{n = 1}^{\infty} \frac{\mathbb{E}[|X_{n}|^{2}]}{n^{2}} < \infty \,,
$$
then
$$
  \frac{1}{N}\sum_{n = 1}^{N} X_{n} \rightarrow 0 \quad a.s. 
$$
\end{thm}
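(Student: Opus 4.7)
The plan is to combine the $L^{2}$ martingale convergence theorem with Kronecker's lemma, which is the standard route for upgrading variance summability into an almost sure Cesàro statement for martingale differences.

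First, I would introduce the normalized partial sums
$$
  S_{N} = \sum_{n = 1}^{N} \frac{X_{n}}{n}.
$$
Because $X_{n}/n$ is $\mathcal{F}_{n}$-measurable and $\mathbb{E}[X_{n}/n \mid \mathcal{F}_{n-1}] = 0$, the sequence $(X_{n}/n)$ is itself a martingale difference sequence with respect to $(\mathcal{F}_{n})$, hence $S_{N}$ is an $L^{2}$ martingale. Second, by orthogonality of martingale differences,
$$
  \mathbb{E}[S_{N}^{2}] = \sum_{n = 1}^{N} \frac{\mathbb{E}[|X_{n}|^{2}]}{n^{2}} \leq \sum_{n = 1}^{\infty} \frac{\mathbb{E}[|X_{n}|^{2}]}{n^{2}} < \infty
$$
by hypothesis, so $\sup_{N} \mathbb{E}[S_{N}^{2}] < \infty$. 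The $L^{2}$ martingale convergence theorem then gives an $\mathcal{F}_{\infty}$-measurable random variable $S_{\infty}$ with $S_{N} \to S_{\infty}$ almost surely (and in $L^{2}$).

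Third, I would apply Kronecker's lemma pathwise. Recall that if a real sequence $(s_{N})$ converges and $(b_{n})$ is strictly increasing to $\infty$, then $b_{N}^{-1} \sum_{n = 1}^{N} b_{n}(s_{n} - s_{n-1}) \to 0$. Taking $s_{N} = S_{N}(\omega)$ and $b_{n} = n$ on the full-measure set where $S_{N}$ converges, the telescoping $s_{n} - s_{n-1} = X_{n}/n$ yields
$$
  \frac{1}{N} \sum_{n = 1}^{N} X_{n} = \frac{1}{N} \sum_{n = 1}^{N} n \cdot \frac{X_{n}}{n} \longrightarrow 0 \quad \text{a.s.}
$$
which is the desired conclusion.

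The argument is essentially frictionless once the right decomposition is chosen; the one place requiring genuine input is the $L^{2}$ martingale convergence theorem, which in turn relies on Doob's maximal inequality and the orthogonality of martingale differences. Kronecker's lemma is deterministic and harmless. So the main conceptual step, and really the only obstacle, is recognizing that the correct object to control is the weighted sum $\sum X_{n}/n$ rather than $\sum X_{n}$ directly, so that the hypothesis $\sum n^{-2}\mathbb{E}[|X_{n}|^{2}] < \infty$ translates exactly into $L^{2}$-boundedness of that weighted martingale.
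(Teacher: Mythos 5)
Your proof is correct and is the standard argument (weighted martingale $\sum_n X_n/n$, orthogonality of increments, $L^2$-bounded martingale convergence, then Kronecker's lemma); the paper itself states this result as Theorem 3.3.1 of Stout's book and gives no proof, and your route is essentially the classical one found there. No gaps.
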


\subsection{Proof of main results.}
For a step size $h > 0$ that will be described later, we define the time-$h$ sample chain as
$\Phi^{h}_{n} = \Phi_{nh}$. The superscript $h$ is dropped when it leads
to no confusion. Recall that $U(\mathbf{x}) = H^E(\mathbf{x}) +
H^I(\mathbf{x})+1$. The following two lemmas verify conditions (a) and
(b) for Theorem \ref{hairer}.

\begin{lem}
\label{cond1}
For $h > 0$ sufficiently small, there exist constants $K
> 0$ and $\gamma \in (0, 1)$, such that 
$$
  P^{h}U \leq \gamma U + K \,.
$$
\end{lem}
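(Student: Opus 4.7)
The plan is to bound $P^h U(\mathbf{x}) = \mathbb{E}_{\mathbf{x}}[U(\Phi_h)]$ by tracking the pending excitatory and inhibitory kick counts separately. Writing $U-1 = H^E + H^I$, I would decompose $H^Q(\Phi_h)$ into two contributions: initial pending $Q$-kicks that are still unabsorbed at time $h$, plus new $Q$-kicks generated by spikes during $[0,h]$. The first contribution is easy to control: by the memoryless property, the aggregate per-neuron clock rate $H^Q_{(m,n,k)}/\tau_Q$ prescribed in Section~2 is equivalent to assigning each individual pending $Q$-kick its own independent $\mathrm{Exp}(1/\tau_Q)$ absorption clock, so the expected number of initial $Q$-kicks still pending at time $h$ equals
\[
H^Q(\mathbf{x})\, e^{-h/\tau_Q}.
\]
Crucially, this is independent of any spiking that occurs along the way, since a kick that expires and causes a spike is still removed from $H^Q$.

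The bulk of the work is in bounding the new kicks created in $[0,h]$ uniformly in $\mathbf{x}$. The key observation is the refractory period: after any spike the firing neuron sits in state $\mathcal{R}$ for an $\mathrm{Exp}(1/\tau_R)$ time during which it is immune to all incoming kicks, so the consecutive inter-spike intervals of any single neuron stochastically dominate independent $\mathrm{Exp}(1/\tau_R)$ variables. This forces $N_i([0,h]) \preceq 1 + \mathrm{Poisson}(h/\tau_R)$ for the spike count of neuron $i$, and summing over all $MN(N_E+N_I)$ neurons gives
\[
\mathbb{E}_{\mathbf{x}}[\mbox{total spikes in }[0,h]] \;\leq\; MN(N_E+N_I)\,(1+h/\tau_R) \;=:\; S(h),
\]
a bound that does not see $\mathbf{x}$. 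Since a single $Q$-spike creates on average at most
\[
\mu^{(Q)} \;:=\; (N_E P_{E,Q}+N_I P_{I,Q}) + \max_{m,n}|\mathcal{N}(m,n)|\,(N_E \rho_{E,Q}+N_I \rho_{I,Q})
\]
new $Q$-kicks, the number of new $Q$-kicks created in $[0,h]$ (and hence the number still pending at time $h$) has expectation at most $S(h)\,\mu^{(Q)}$.

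Combining the two contributions gives
\[
\mathbb{E}_{\mathbf{x}}[H^Q(\Phi_h)] \;\leq\; H^Q(\mathbf{x})\,e^{-h/\tau_Q} + S(h)\,\mu^{(Q)},
\]
and summing over $Q\in\{E,I\}$ with $\gamma(h) := \max(e^{-h/\tau_E},e^{-h/\tau_I}) \in (0,1)$ yields
\[
P^h U(\mathbf{x}) \;\leq\; \gamma(h)\, U(\mathbf{x}) + K(h),\qquad K(h) := S(h)(\mu^{(E)}+\mu^{(I)})+1,
\]
which is the desired drift inequality. The principal obstacle is precisely the uniform-in-$\mathbf{x}$ spike bound: because the instantaneous firing rate at a neuron with $V_i$ near threshold scales linearly with $H^E_i$, a direct generator computation for $\mathcal{L}U$ produces a term proportional to $H^E$ with a coefficient that could easily be positive, and thus cannot close. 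The refractory period provides the state-independent ceiling on spike production that makes the Foster--Lyapunov argument go through. I note that ``$h$ sufficiently small'' plays no role in this inequality (it holds for every $h>0$); presumably smallness is needed for the minorization condition established in the next lemma.
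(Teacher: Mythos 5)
Your proof is correct and follows essentially the same route as the paper's: a drift decomposition of the pending-kick count into exponentially surviving initial kicks and newly created kicks, with the refractory period supplying the state-independent bound $1+\mathrm{Pois}(h/\tau_{\mathcal R})$ on each neuron's spike count. Your accounting is in fact slightly more careful than the paper's --- you multiply the spike count by the expected number of postsynaptic kicks per spike, and your exact decay factor $e^{-h/\tau_Q}$ removes the need for ``$h$ sufficiently small,'' which the paper uses only to linearize $1-e^{-h/\tau}$ --- but these refinements affect only the constants.
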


This proof is similar to that of Lemma 2.4 of \cite{li2017well}. We include
it for the sake of completeness of this paper. 

\begin{proof}
During $(0, h]$, let $N_{out}$ be the number of pending kicks from
    $H^E(\mathbf{x})$ and $H^I(\mathbf{x})$ that takes effect and
  $N_{in}$ be the number of new spikes produced. We have 
$$
  P^{h}U( \mathbf{x}) = \mathbb{E}_{\mathbf{x}}[ U( \Phi_{h})] = U(
  \mathbf{x}) - \mathbb{E}_{\mathbf{x}}[ N_{out}] + \mathbb{E}_{\mathbf{x}}[N_{in}] \,.
$$

The probability that an excitatory (resp. inhibitory) pending kick
takes effect on $(0, h]$ is $(1-e^{- h/ \tau^E})$ (resp. $(1-e^{- h/
  \tau^I})$). Hence for $h$ sufficiently small, we have
$$
 \mathbb{E}_{\mathbf{x}}[ N_{out}] \ge (H^E(\mathbf{x}) + H^I(\mathbf{x})) ( 1 -
 e^{-h/\max\{\tau^E, \tau^I\}} ) \geq  \frac{1}{2 \max\{\tau^E,
   \tau^I\}}  \ h \ (U(\mathbf{x})-1) \,.
$$

For a neuron $(m,n,k)$, after each spike it spends an exponential time with mean
$\tau_{\mathcal R}$ at state $\mathcal R$. Hence the number of spikes
produced by neuron $(m,n,k)$ is at most

$$
  1 + \mathbb{E}[
  \mbox{Pois}(h/\tau_{\mathcal R})] = 1 + h/\tau_{\mathcal R} \,, 
$$
where $\mbox{Pois}(\lambda)$ is a Poisson random variable with rate
$\lambda$. Hence
$$
  \mathbb{E}_{\mathbf{x}}[N_{in}] \leq MN(N_{E} + N_{I})\cdot(1 + h/\tau_{\mathcal R}) \,.
$$
The proof is completed by letting 
$$
\gamma = 1 - h/(2 \max\{\tau^E, \tau^I\}) \qquad \mbox{and} \qquad
K =  MN(N_{E} + N_{I})\cdot (1 + h/\tau_{\mathcal R})\ +\frac{h}{2 \max\{\tau^E,\tau^I\}}.
$$
\end{proof}

For $b \in \mathbb{Z}_{+}$, let
$$
C_{b} = \{  \mathbf{x} \in \mathbf{X} | H^E( \mathbf{x}) + H^I( \mathbf{x})
\leq b \} \,. 
$$

\begin{lem}
\label{cond2}
Let $\mathbf{x}_{0}$ be the state that $H^E = H^I = 0$ and  $V_{(m,n,k)} = \mathcal R$
for all $1 \leq m \leq M$, $1 \leq n \leq N$, and $1 \leq k \leq N_{E}
+ N_{I}$. Then for any $h > 0$, there exists a constant $\delta =
\delta (b, h)$ depending on $b$ such that there exists a constant $c$
depending on $b$ and $h$ such that, 
$$
  P^{h}(\mathbf{x}, \mathbf{x}_{0}) > c \quad \mbox{ for all } \mathbf{x} \in C_{b}
$$
\end{lem}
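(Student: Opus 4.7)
The plan is to reduce the lemma to a pointwise-positivity statement via a finiteness observation, and then establish positivity by explicit trajectory construction. First I would note that $C_{b}$ is a \emph{finite} subset of $\mathbf{X}$: each membrane potential $V_{(m,n,k)}$ takes one of the $|\Gamma|=M+M_{r}+2$ possible values, while the constraint $H^{E}(\mathbf{x})+H^{I}(\mathbf{x})\le b$ allows only finitely many joint configurations of the pending-kick counts across the $MN(N_{E}+N_{I})$ neurons. Hence it suffices to show $P^{h}(\mathbf{x},\mathbf{x}_{0})>0$ for each individual $\mathbf{x}\in C_{b}$, and then take $c:=\min_{\mathbf{x}\in C_{b}}P^{h}(\mathbf{x},\mathbf{x}_{0})$.

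For a fixed $\mathbf{x}\in C_{b}$ I would exhibit an explicit event $A_{\mathbf{x}}$ of strictly positive probability on which $\Phi_{h}=\mathbf{x}_{0}$. Partition $[0,h]$ into a driving phase $[0,h/2]$ and a quiet phase $[h/2,h]$. In the driving phase, enumerate the neurons $j=1,\ldots,MN(N_{E}+N_{I})$ and process them sequentially: for each $j$ let any currently pending I kicks at $j$ take effect first, then any pending E kicks (which may already spike the neuron); if $j$ has not yet spiked, apply a burst of external E Poisson arrivals to push $V_{j}$ up to the threshold $M$, triggering a spike and sending $V_{j}$ to $\mathcal{R}$. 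Every spike is accompanied by a specified outcome of the Bernoulli trials determining postsynaptic targets, and any such outcome has strictly positive probability since $P_{Q,Q'},\rho_{Q,Q'}\in[0,1]$. After all $MN(N_{E}+N_{I})$ neurons have been processed every neuron is at $\mathcal{R}$, but spikes of later-processed neurons may have left outstanding pending kicks at already-processed neurons; in the remainder of $[0,h/2]$ let each of these residual kicks discharge, which is harmless because the recipient is at $\mathcal{R}$ and so $V$ is unchanged while $H^{E}$ or $H^{I}$ decreases by one. At time $h/2$ the state is therefore exactly $\mathbf{x}_{0}$. In the quiet phase $[h/2,h]$ require that no state-changing event occurs; since the only such event at $\mathbf{x}_{0}$ is a neuron leaving $\mathcal{R}$ (external kicks to $\mathcal{R}$-neurons cause no state change), this has probability at least $\exp\bigl(-MN(N_{E}+N_{I})h/(2\tau_{\mathcal{R}})\bigr)$.

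Each specified event in the driving phase is triggered by an exponential clock of strictly positive rate (external arrivals at rates $\lambda^{E,I}_{m,n}$, pending discharges at rates $H^{E,I}_{(m,n,k)}/\tau^{E,I}$, Bernoulli postsynaptic-target outcomes of positive mass), and the probability that a prescribed finite ordered sequence of $K$ events occurs within $[0,h/2]$ with no other events in the gaps is a strictly positive integral of the joint exponential holding-time density over a nonempty open subregion of $\{0<t_{1}<\cdots<t_{K}<h/2\}$. Multiplying by the positive quiet-phase probability gives $\mathbb{P}[A_{\mathbf{x}}]>0$, hence $P^{h}(\mathbf{x},\mathbf{x}_{0})\ge\mathbb{P}[A_{\mathbf{x}}]>0$, which together with the finiteness of $C_{b}$ completes the proof.

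The principal obstacle is the cascade bookkeeping inside the driving phase: a spike at neuron $j$ generates random postsynaptic kicks across other neurons, some already processed (where absorption at $\mathcal{R}$ is harmless) and some not (where the extra kicks must still be discharged before the recipient is driven). The sequential scheme above handles this uniformly, by draining \emph{all} currently pending kicks at neuron $k$ immediately before driving $k$ to spike---regardless of their origin---and relegating the remaining harmless discharges to a final cleanup sub-interval. A subsidiary point is that the argument implicitly uses $\lambda^{E,I}_{m,n}>0$ so that external kicks can reliably supply the driving current, an assumption taken for granted throughout the paper.
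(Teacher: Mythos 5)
Your proof is correct and follows essentially the same strategy as the paper: construct an explicit positive-probability event on $[0,h]$ that drives every neuron to $\mathcal{R}$ via external Poisson kicks and lets all pending kicks discharge harmlessly once the recipients are refractory. The only organizational differences are that you obtain uniformity by observing $C_{b}$ is finite and taking a minimum (the paper instead asserts the constructed event has a uniform positive probability directly), and that you drain pending kicks neuron-by-neuron during the driving phase rather than simply conditioning on no pending kick firing before $h/2$ and deferring all discharges to $(h/2,h]$ as the paper does; both variants are sound.
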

\begin{proof}
For each $\mathbf{x} \in C_{b}$, it is sufficient to construct an
event that moves from $\mathbf{x}$ to $\mathbf{x}_{0}$ with a uniform
positive probability. Below is one of many possible constructions.

\begin{itemize}
  \item[(i)] On $(0, h/2]$, a sequence of external Poisson kicks drives each $V_{(m,n,k)}$
   to the threshold value $M$, hence puts $V_{(m,n,k)}=\mathcal R$. Once at
$\mathcal R$, $V_{(m,n,k)}$ remains there before $t = h$. In addition,
no pending kicks takes effect on $(0,h/2]$.
\item[(ii)] All pending kicks at state $\mathbf{x}$ take effect on
  $(h/2, h]$. Obviously this has no effect on membrane potentials.
\end{itemize}

Since $b$ is bounded, the number of pending kicks is less than $b +
MN(N_{E} + N_{I}) < \infty$. It is easy to see that this event happens
with positive probability.
\end{proof}

Lemmas \ref{cond1} and \ref{cond2} together imply Theorem \ref{invariant}.

\begin{proof}[Proof of Theorem \ref{invariant}] 
Choose step size $h$ as in Lemma \ref{cond1}. By Lemmata
\ref{cond1}, \ref{cond2} and Theorem
\ref{hairer}, $\Phi^{h}$ admits a unique invariant probability measure $\pi_{h}$ in $L_{U}(
\mathbf{X})$.

\medskip

We then show that $\pi_{h}$ is invariant under $\Phi_t$ for any
$t > 0$. This can be done by proving the following ``continuity at zero''
condition, which means for
any probability measure $\mu$ on $\mathbf{X}$,
$$
  \lim_{t \rightarrow 0}\| \mu P^{t} - \mu \|_{TV} = 0 \,.
$$

\medskip

For any small $\epsilon > 0$, there exists $b < \infty$ and (small)
$\delta > 0$ such that if
$U = \{ \mathbf{x} \in \mathbf{X} \,|\, H^{E}( \mathbf{x}) +
H^{I}(\mathbf{x}) < b \}$, then (i) $\mu(U) > 1 - \epsilon/4$ and (ii) $\mathbb{P}[ \mbox{ no clock rings on } [0, \delta) ] \geq 1
- \epsilon/4$. For any set $A \subset \mathbf{X}$, we have
\begin{eqnarray*}
(\mu P^{\delta}) (A)  & =  & \sum_{\mathbf{x} \in \mathbf{X}}
P^{\delta}( \mathbf{x} , A) \mu( \mathbf{x} )\\
&=& \sum_{\mathbf{x}\in U \cap A} P^{\delta}(\mathbf{x}, A) \mu( \mathbf{x} ) + \sum_{\mathbf{x} \in U -
  A} P^{\delta}(\mathbf{x}, A) \mu( \mathbf{x} ) + \sum_{\mathbf{x} \in U_{c}} P^{\delta}(\mathbf{x}, A) \mu( \mathbf{x}) \\
&=& \mu(U \cap A) - a_{1} + a_{2} + a_{3} \,,
\end{eqnarray*}
where
\begin{eqnarray*}
a_{1} & =  & \sum_{\mathbf{x} \in U \cap A} (1 -
             P^{\delta}(\mathbf{x}, A) )\mu(\mathbf{x}) \leq
             \frac{\epsilon}{4} \mu( U \cap A) \leq
             \frac{\epsilon}{4}\\ 
a_{2} &=& \sum_{\mathbf{x} \in U \setminus A} P^{\delta}(\mathbf{x},
          A) \mu( \mathbf{x}) \leq \frac{\epsilon}{4} \mu( U \setminus
          A) \leq \frac{\epsilon}{4}\\ 
a_{3} &=&\sum_{\mathbf{x} \in U^{c}}P^{\delta}(\mathbf{x},
          A) \mu( \mathbf{x}) \leq \mu(U^c) \leq \frac{\epsilon}{4} \,.
\end{eqnarray*}
Further, $\mu(A) - \mu(U \cap A) \leq \mu(U^{c}) <
\frac{\epsilon}{4}$. Hence
$$
  \epsilon > \sup_{A \subset \mathbf{X}} | (\mu P^{\delta})(A) - \mu(A) | \geq \| \mu P^{\delta} - \mu \|_{TV} \,.
$$
This implies the ``continuity at zero'' condition.

Notice that $\pi_{h}$ is invariant for any $\Psi^{hj/k}_{n}$, where
$j, k \in \mathbb{Z}^{+}$ (Theorem 10.4.5 of \cite{meyn2009markov}). Assume $t/h \notin
\mathbb{Q}$ without loss of generality. By the density of orbits in irrational rotations, there 
exists sequences $a_{n}$, $b_{n} \in \mathbb{Z}^{+}$ such that
$$
  d_{n} := t - \frac{a_{n}}{b_{n}}h \searrow 0 \,.
$$
Therefore,
$$
\| \pi_{h}P^{t} - \pi_{h} \|_{TV} = \lim_{n \rightarrow \infty} \|
\pi_{h} P^{\frac{a_{n}}{b_{n}} h} 
  P^{d_{n}}  - \pi_{h} \| =   \lim_{n \rightarrow \infty}
\| \pi_{h}P^{d_{n}} - \pi_{h} \|_{TV}  = 0
$$
by the ``continuity at zero'' condition. Hence
$\pi_{h}$ is invariant with respect to $P^{t}$. 

\medskip

It remains to prove the exponential convergence for any $t > 0$. By
Lemma \ref{cond1}, there exists $B = K + 1 < \infty$ such that $P^{t}U \leq
B U$ for all $t < h$. Let $n = \left \lfloor t/h \right \rfloor$ and $d = t
- hn$. We have

\begin{eqnarray*}
  \| \mu P^{t} - \nu P^{t}\|_{U} & = & \| (\mu P^{d}) P^{nh} - (\nu
  P^{d}) P^{nh}\|_{U}\\
&  = & C r^{n} \cdot \| \mu P^{d} - \nu P^{d} \|_{U}\  \leq \ BC r^{n}
  \| \mu - \nu \|_{U} 
\end{eqnarray*}
and
\begin{eqnarray*}
  \| P^{t} \xi - \pi(\xi)\|_{U} & = & \| P^{nh} (P^{d} \xi)- P^{nh} (P^{d}
  \pi(\xi))\|_{U}\\
 & = & C r^{n} \cdot \| P^{d} \xi - P^{d} \pi(\xi) \|_{U} \ \leq \ BC r^{n}
  \| \xi - \pi(\xi) \|_{U} \,.
\end{eqnarray*}
This completes the proof.
\end{proof}

\medskip
\begin{proof}[Proof of Corollary \ref{cor1}] By the invariance of
  $\pi$, for any local population $L_{m,n}$ and any $Q \in \{E, I\}$, we have
$$
  F^{Q}_{m,n} = \mathbb{E}_{\pi}[N^{Q}_{(m,n)}([0, 1])] \,.
$$
For every $\mathbf{x} \in \mathbf{X}$ we have
$$
  \mathbb{E}_{\mathbf{x}}[N^{Q}_{m,n}([0, 1])] \leq N_{Q}(1 + \mathbb{E}[ \mbox{
    Pois}(1/\tau_{\mathcal R})] )= N_{Q}(1 + 1/\tau_{\mathcal R}) \,. 
$$ 
Thus $F^{Q}_{m,n} = \mathbb{E}_{\pi}[N^{Q}_{(m,n)}([0, 1])] < \infty$.

It remains to prove the law of large number. Without loss of
generality let $Q = E$. Let
$$
  \xi( \mathbf{x}) = \sum_{k = 1}^{N_{E}} \mathbf{1}_{\{ V_{m,n,k}  =
    \mathcal{R}\}} \,.
$$
Then by the Ergodic Theorem, for every $\mathbf{x}$ and almost every
sample path $\Phi_{t}$ with initial condition $\mathbf{x}$, we have
$$
  \lim_{T \rightarrow \infty} \frac{1}{T} \int_{0}^{T} \xi( \Phi_{t})
  \mathrm{d}t = \pi( \xi) = N_{E} \tau_{\mathcal{R}} F^{E}_{m,n}\,.
$$
In addition, the time duration that neurons stay at $\mathcal{R}$ are
independent. Then by law of large numbers,
$$
  \lim_{T\rightarrow \infty} \frac{\int_{0}^{T} \xi( \Phi_{t})
    \mathrm{d}t}{N^{E}_{(m,n)}([0, T])} = \tau_{\mathcal{R}} \,.
$$
This completes the proof. 

\end{proof}

\medskip
\begin{proof}[Proof of Corollary \ref{cor2}]
Consider the time-$T$ sample chain of $\Phi_{k}:=\Phi_{kT}$. Define an auxiliary process $Y_{k}, k \geq 0$ such that
$$
  Y_{k} = N^{Q_{1}}_{(m,n)}( [kT, (k+1)T )) N^{Q_{2}}_{(m', n')}([kT,
  (k+1)T)) \,.
$$
Let $Z_{k} = \mathbb{E}_{\Phi_{k}}[Y_{k}]$. It is easy to see that
$Z_{k}$ is a measurable function of $\Phi_{k}$. In addition, $Y'_{k} =
Y_{k} - Z_{k}$ is a martingale difference sequence because
$\mathbb{E}[Y'_{k} \,|\, \mathcal{F}_{k}] = 0$, where $\mathcal{F}_{k}$
is the $\sigma$-field generated by $\{\Phi_{0}, \cdots,
\Phi_{n}\}$. By Holder's inequality, we have
$$
  \mathbb{E}[|Y'_{k}|^{2}] \leq \mathbb{E}[|Y_{k}|^{2}] \leq
  \mathbb{E}[ (N^{Q_{1}}_{(m,n)}( [kT, (k+1)T
  )))^{4}]^{1/2}\mathbb{E}[ (N^{Q_{2}}_{(m',n')}( [kT, (k+1)T
  )))^{4}]^{1/2} \,.
$$
Same as before, we have
$$
  \mathbb{E}[ (N^{Q_{1}}_{(m,n)}( [kT, (k+1)T
  )))^{4}] \leq \mathbb{E}[ (1 +
  \mbox{Pois}(\frac{T}{\tau_{\mathcal{R}}}))^{4}] < \infty
$$
and
$$
  \mathbb{E}[ (N^{Q_{2}}_{(m',n;)}( [kT, (k+1)T
  )))^{4}] \leq \mathbb{E}[ (1 +
  \mbox{Pois}(\frac{T}{\tau_{\mathcal{R}}}))^{4}] < \infty \,.
$$
Then by the law of large numbers of martingale difference sequence, we
have
$$
  \frac{1}{K}\sum_{n = 0}^{K-1} (Y_{k} - Z_{k}) \rightarrow 0\,.
$$
In addition, since $Z_{k}$ is an observable of $\Phi_{k}$, by Ergodic
Theorem we have
$$
  \frac{1}{K}\sum_{k = 0}^{K-1}Z_{k} \rightarrow
  \mathbb{E}_{\pi}[Z_{0}] = \mathbb{E}_{\pi}[N^{Q_{1}}_{m,n}([0, T))
  N^{Q_{2}}_{m',n'}([0, T)) ] = \mathbb{E}_{\pi}[N^{Q_{1}}_{m,n}([0, T])
  N^{Q_{2}}_{m',n'}([0, T]) ] 
$$
almost surely. Hence 
$$
  \frac{1}{K}\sum_{n = 0}^{K-1} Y_{k} \rightarrow \mathbb{E}_{\pi}[N^{Q_{1}}_{m,n}([0, T])
  N^{Q_{2}}_{m',n'}([0, T]) ] \,.
$$
\end{proof}

\medskip 

The proof of Corollary \ref{cor3} is identical to that of Corollary \ref{cor2}.

\section{Numerical examples with different local and global spiking
  patterns} 
The neural field model proposed in this paper is spatially
heterogeneous. Hence its spiking pattern consists of two factors:
the local synchrony and the spatial correlation. By adjusting
parameters, we can change not only the degree of partial synchrony within a
local population, but also the spike count correlation between local
populations. These local and global spiking pattern are emergent from
network activities. One goal of this paper is to interpret how
these emergent patterns arise from the interaction of
neurons. Obviously it is not practical to test all possible
parameters and discover all emergent spiking patterns. Instead, we will
demonstrate the following five numerical examples representing five
different local and global degrees of synchrony. 

\subsection{Parameters of examples.} 

We will use common parameters prescribed in Section 2.2. In addition,
we use $N = M = 3$ in all five examples. For the sake of simplicity, $9$ populations are label as
$1, 2, \cdots, 9$ from upper left corner to lower right corner. In
order to have heterogeneous external drive rates, we assume that $\lambda^{E}_{m,n} = \lambda^{I}_{m,n} =
\lambda_{Even}$ if $(n-1)*M + m$ is even, and $\lambda^{E}_{m,n} = \lambda^{I}_{m,n} =
\zeta\lambda_{Even}$ if $(n-1)M + m$ is odd. This alternating
external drive rates is similar to the realistic model of visual
cortex if a local population models an orientation column. The main varying parameters
in our numerical examples are the strength of nearest-neighbor
connectivity $\mathit{ratio}_{E}$, the ratio of external drive rates in
nearest neighbors $\zeta$, and the synapse delay time after the
occurrence of a spike $\tau_{E}, \tau_{I}$. We first follow the idea
of \cite{li2017well} to produce three examples with ``homogeneous'',
``regular'', and ``synchronized'' patterns respectively by varying the
synapse delay time. Then we change
$\mbox{ratio}_{E}$ and external drive rates for the ``regular''
network to produce two more examples with different global
synchrony. As in \cite{li2017well}, we replace a single $\tau^{E}$ by two
synapse times $\tau^{EE}$ and $\tau^{IE}$ to
denote the expected delay times after an excitatory spike takes effect
in excitatory and inhibitory neurons, respectively. 

\begin{itemize}
  \item The {\bf ``homogeneous''} neural field, denoted by {\bf HOM} in
    the figures:
$$
  \tau_{EE} = 4 \mbox{ ms},\quad \tau_{IE} = 1.2 \mbox{ ms}, \quad
  \tau_{I} = 4.5 \mbox{ ms}, \quad \mathit{ratio}_{E} = 0.1, \quad \zeta
  = 11/12 \,.
$$
\item The {\bf ``synchronized''} neural field, denoted by {\bf SYN} in
  the figures: 
$$
    \tau_{EE} = 0.9 \mbox{ ms},\quad \tau_{IE} = 0.9 \mbox{ ms}, \quad
  \tau_{I} = 4.5 \mbox{ ms}, \quad \mathit{ratio}_{E} = 0.15, \quad \zeta
  = 11/12 \,.
$$
\item The {\bf ``regular''} neural field with weak nearest neighbor
  connectivity, denoted by {\bf REG1} in the figures:
$$
    \tau_{EE} = 1.6 \mbox{ ms},\quad \tau_{IE} = 1.2 \mbox{ ms}, \quad
  \tau_{I} = 4.5 \mbox{ ms}, \quad \mathit{ratio}_{E} = 0.05, \quad \zeta
  = 11/12 \,.
$$
\item The {\bf ``regular''} neural field with strong nearest neighbor
  connectivity, denoted by {\bf REG2} in the figures:
$$
    \tau_{EE} = 1.6 \mbox{ ms},\quad \tau_{IE} = 1.2 \mbox{ ms}, \quad
  \tau_{I} = 4.5 \mbox{ ms}, \quad \mathit{ratio}_{E} = 0.15, \quad \zeta
  = 11/12 \,.
$$
\item The {\bf ``regular''} neural field with strong nearest neighbor
  connectivity and fluctuating external drive rates, denoted by {\bf REG3} in the figures:
$$
    \tau_{EE} = 1.6 \mbox{ ms},\quad \tau_{IE} = 1.2 \mbox{ ms}, \quad
  \tau_{I} = 4.5 \mbox{ ms}, \quad \mathit{ratio}_{E} = 0.15, \quad \zeta
  = 1/2 \,.
$$
\end{itemize}

\subsection{Numerical results for five example neural fields.} 
We present numerical simulation result of the five example
networks. The rastor plots generated by networks {\bf HOM} and {\bf
  SYN} are not very different from what we have presented in
\cite{li2017well}. The {\bf HOM} network produces homogeneous spike trains
in all local populations and the {\bf SYN} network produces largely synchronized neuron activities in all local populations. Since neuron activities in different local populations have the same pattern, we only present the
rastor plot of the central local population $L_{2,2}$ for these two
examples in Figure \ref{synhom}.

\begin{figure}[h]
\centerline{\includegraphics[width = \linewidth]{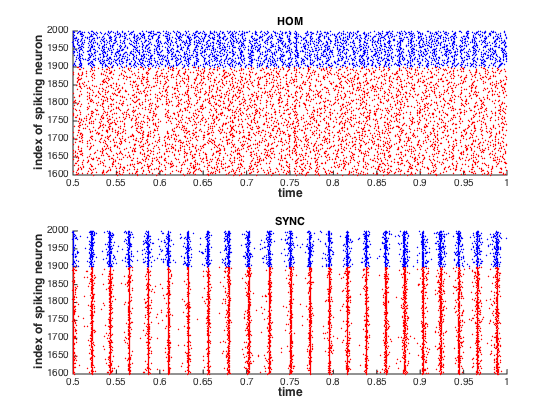}}
\caption{Rastor plot of the central local population of {\bf SYN} and
  {\bf HOM} networks. The drive rate at even-indexed local populations
  is $\lambda_{Even} = 6000$ spikes/sec. The index of neuron $(m,n,k)$
  is $[(n-1)M + m](N_{E} + N_{I}) + k$.}
\label{synhom}
\end{figure}

The three {\bf REG} networks are much more interesting as we can see
different spike count correlations among different local populations
when parameters change. With higher $\mathit{ratio}_{E}$
($\mathit{ratio}_{E} = 0.15$), the spike
activities in all $9$ blocks are largely correlated (Figure \ref{reg}
middle panel). When $\mathit{ratio}_{E} = 0.05$, much less correlation
is seen. And the rastor plot also looks less synchronized (Figure
\ref{reg} left panel). If the long range connectivity remains
$\mathit{ratio}_{E} = 0.15$ but we drive odd-indexed local populations
only half strong as even-indexed local populations, the spike count
correlation is between the previous two cases (Figure \ref{reg} right
panel). From the three rastor plots presented in Figure \ref{reg}, we
can conclude that both stronger long range connectivity and more
homogeneous drive rate contribute to a more correlated spiking
pattern among different local populations. A natural question is
how such correlated spiking activity changes when two local populations
that are further apart. We will extensively investigate this problem
in Section 6. 

\begin{figure}[h]
\centerline{\includegraphics[width = \linewidth]{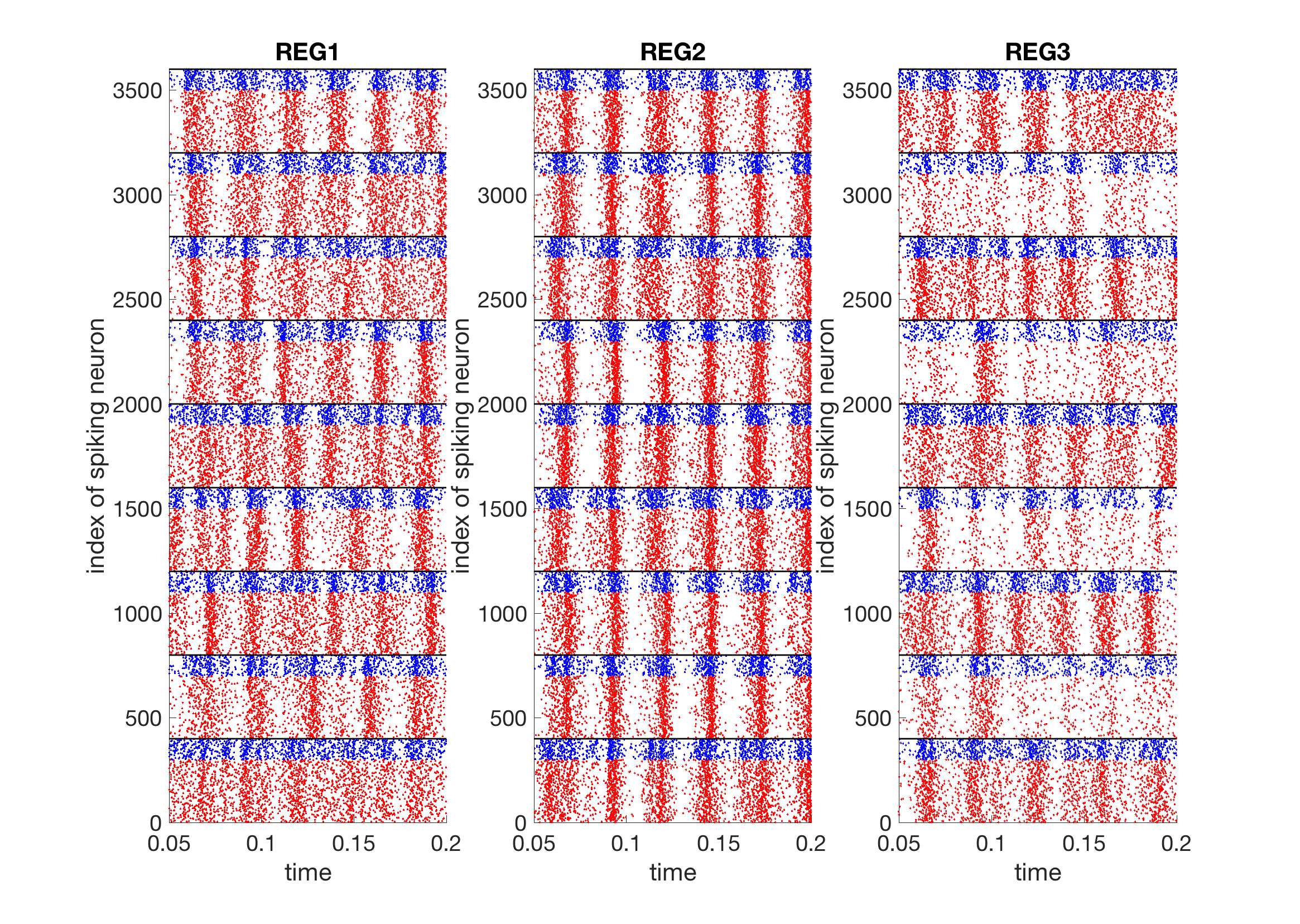}}
\caption{Rastor plot of the full population of three {\bf REG}
  networks. The drive rate at even-indexed local populations
  is $\lambda_{Even} = 6000$ spikes/sec. The index of neuron $(m,n,k)$
  is $[(n-1)M + m](N_{E} + N_{I}) + k$.}
\label{reg}
\end{figure}

It remains to comment on the firing rate. The mean firing rate of the
central block is presented in Figure \ref{rate}. The drive rate of
even-indexed population varies from $\lambda_{Even} = 1000$ to
$\lambda_{Even} = 8000$. Different from our previous result in
\cite{li2017well}, the synchronized network {\bf SYN} now fires a lower
rate when the external drive is very strong. We believe the reason is
that inhibitory kicks in paper \cite{li2017well} are
voltage-dependent. As a result, when the spiking activity is very
synchronized, a neuron tends to receive lots of inhibitory kicks when
it just jumps out from state $\mathcal{R}$ (i.e., low membrane
potential). Hence the effective inhibitory current in the synchronized
network in \cite{li2017well} is weaker, which contributes to a higher
firing rate there. 
\begin{figure}[h]
\centerline{\includegraphics[width = \linewidth]{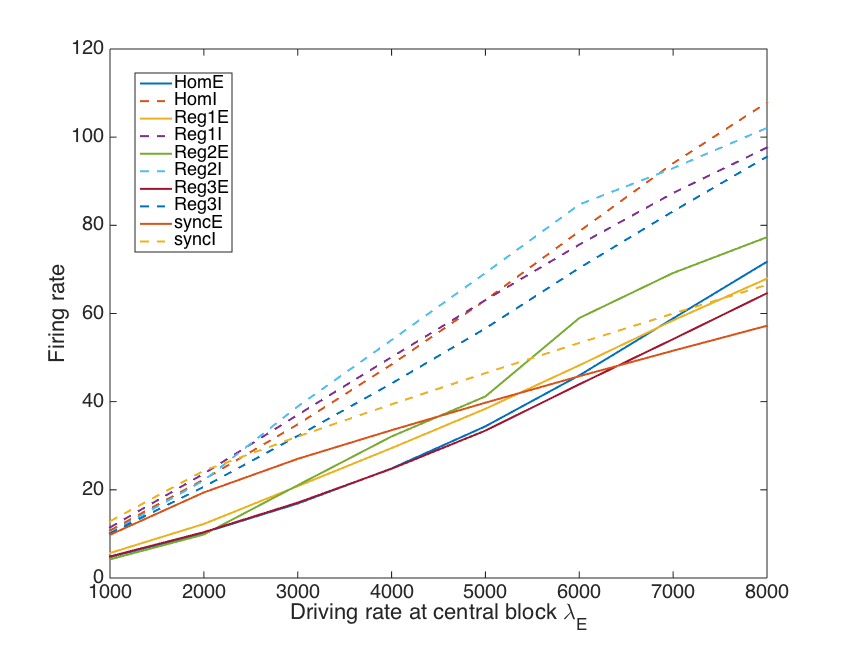}}
\caption{Mean firing rate at the central local population verses
  $\lambda_{Even}$ for all five networks. The drive rate at even-indexed local populations
  increases from $\lambda_{Even} = 1000$ spikes/sec to $8000$
  spikes/sec.}
\label{rate}
\end{figure}

\section{Comparing firing rates with mean-field approximations}
The aim of this section is to study the mean-field-type approximations
of the network model. Two reduced models with exactly solvable mean
firing rates are proposed in Section \ref{51} and \ref{52}. In Section
\ref{53}, we compare the mean firing rate produced by
these reduced models with the empirical firing rate of the
network model, and analyze the discrepancy between these firing rates.

\subsection{Reduced linear model}
\label{51}
Similarly as in the reduced models for a homogeneous population of
neurons studied in \cite{li2017well}, we assume that the membrane potential of each neuron $i$ changes at a constant speed and resets from $1$ to $0$ after firing without refractory state,
\begin{equation}
\label{drift}
\frac{dv}{dt} = F^+ - F^-, \quad \text{for $v \in$ [0,1]},
\end{equation}
where $F^+$ and $F^-$ are forces that drive membrane potential upward and downward respectively. In particular, with respect to the quantities defined previously, we have 
\begin{align}
& C_{EE} = N_E P_{EE} S_{EE},  \qquad  C_{IE} = N_E P_{IE} S_{IE},\\ \nonumber
& C_{EI} = N_I P_{EI} S_{EI}, \qquad C_{II} = N_I P_{II} S_{II},\\ \nonumber
& D_{EE} = N_E \rho_{EE} S_{EE},  \qquad  D_{IE} = N_E \rho_{IE} S_{IE},\\ \nonumber
& D_{EI} = N_I \rho_{EI} S_{EI}, \qquad D_{II} = N_I \rho_{II} S_{II}.
\end{align}
We can then define upward and downward drifting speed for excitatory neurons in local population $L_{m,n}$ as 
\begin{eqnarray}
\label{linear1}
 F^+ &=& \frac{1}{M} \left( f^E_{m,n}C_{EE} + \sum_{(m',n') \in \mathcal{N}(m,n)} f^E_{m',n'}D_{EE} +\lambda^E_{m,n} \right), \\ \nonumber
 F^- &=& \frac{1}{M} \left( f^I_{m,n} C_{EI}+ \sum_{(m',n') \in \mathcal{N}(m,n)} f^I_{m',n'}D_{EI}\right), 
\end{eqnarray}
and for inhibitory neurons in local population $L_{m,n}$ as 
\begin{align}
\label{linear2}
& F^+ = \frac{1}{M} \left(f^E_{m,n}C_{IE} + \sum_{(m',n') \in \mathcal{N}(m,n)} f^E_{m',n'} D_{IE} +\lambda^I_{m,n} \right) \\ \nonumber
& F^- = \frac{1}{M} \left( f^I_{m,n} C_{II} + \sum_{(m',n') \in \mathcal{N}(m,n)}
  f^I_{m',n'} D_{II} \right) \,.
\end{align}
As introduced in the previous chapter,
$f^E_{m,n}$ and $f^I_{m,n}$ are mean excitatory and inhibitory firing
rates for local population $L(m,n)$. Since we assume each population
$L_{m,n}$ to be homogenous, self-consistency results in an explicit
expression of firing rates, which can be derived in a similar way as
in \cite{li2017well}. To be precise, the above mentioned linear system can be expressed by the
form $\mathbf{A} \mathbf{f}=\mathbf{b}$, where $\mathbf{f}$ and $\mathbf{b}$ are  
\begin{equation}
\mathbf{f} = \begin{pmatrix} f^E_{1,1} \\ f^I_{1,1} \\ f^E_{1,2} \\
  f^I_{1,2} \\ \ldots \\ f^E_{m,n} \\ f^I_{m.n}\end{pmatrix}  \qquad
\mathbf{b} = - \begin{pmatrix} \lambda^E_{1,1} \\ \lambda^I_{1,1} \\  \lambda^E_{1,2} \\ \lambda^I_{1,2} \\ \ldots \\ \lambda^E_{m,n} \\ \lambda^I_{m.n}\end{pmatrix},
\end{equation}
and $\mathbf{A}$ can be derived using \eqref{linear1} to \eqref{linear2}. 

For example, if $M = N =2$, the coefficient matrix $\mathbf{A}$ is
\begin{equation}
 \begin{pmatrix} 
 C_{EE}-M & -C_{EI} & D_{EE} & -D_{EI} & D_{EE} & -D_{EI} & 0 & 0 \\
 C_{IE} & -C_{II}-M & D_{IE} & -D_{II} & D_{IE} & -D_{II} & 0 & 0 \\
 D_{EE} & -D_{EI} & C_{EE}-M & -C_{EI} & 0 & 0 & D_{EE} & -D_{EI}\\
 D_{IE} & -D_{II} & C_{IE} & -C_{II}-M & 0 & 0 & D_{IE} & -D_{II}\\
 D_{EE} & -D_{EI} & 0 & 0 & C_{EE}-M & -C_{EI} &  D_{EE} & -D_{EI} \\
 D_{IE} & -D_{II} & 0 & 0 &  C_{IE} & -C_{II}-M &  D_{IE} & -D_{II} \\
 0 & 0 & D_{EE} & -D_{EI} & D_{EE} & -D_{EI} &  C_{EE}-M & -C_{EI} \\
 0 & 0 &  D_{IE} & -D_{II} &  D_{IE} & -D_{II} & C_{IE} & -C_{II}-M 
 \end{pmatrix} 
\,.
\end{equation}
The explicit form of $\mathbf{A}$ gets complicated quickly with more
populations. But the solution of firing rates always exists whenever
the coefficient matrix $\mathbf{A}$ is invertible. By the perturbation theory
of matrices, it is easy to see that $\mathbf{A}$ is invertible if matrix
$$
 \begin{bmatrix}
C_{EE} - M & -C_{EI}\\
C_{IE} & -C_{II} - M
\end{bmatrix}
$$
is invertible and the coupling strengths $\rho_{Q_{1}Q_{2}}$ are
sufficiently small for $Q_{1}, Q_{2} \in \{E, I\}$.

\subsection{Reduced quadratic model}
\label{52}
A reduced quadratic model can be built
upon the reduced linear model in a similar way, with a slight improvement by including a
fixed refractory period after each firing event. Namely, the
normalized membrane potential satisfies the same drift condition
described in \eqref{drift}, except that whenever $V$ resets from $1$
to $0$, it stays at $0$ for a fixed amount of refractory time
$\tau_{\mathcal{R}}$ before resuming its linear climb.  

Using the self-consistency condition again, we now derive a system of quadratic equations for the excitatory and inhibitory firing rates $f^E_{m,n}$ and $f^I_{m,n}$ of population $L_{m,n}$ as follows,
\begin{align}
\label{quadratic}
& M f^E_{m,n} = \left(1-\tau_{\mathcal{R}} f^E_{m,n} \right) \\ \nonumber
& \hspace{0.7 in} \left(f^E_{m,n} C_{EE} + \sum_{(m',n') \in \mathcal{N}(m,n)} f^E_{m',n'} D_{EE} +\lambda^E_{m,n} -f^I_{m,n} C_{EI} - \sum_{(m',n') \in \mathcal{N}(m,n)} f^I_{m',n'} D_{EI} \right) \\  \nonumber
& M f^I_{m,n} = \left(1-\tau_{\mathcal{R}}f^I_{m,n} \right) \\ \nonumber 
& \hspace{0.7 in} \left(f^E_{m,n} C_{IE}+ \sum_{(m',n') \in \mathcal{N}(m,n)} f^E_{m',n'} D_{IE} +\lambda^I_{m,n} -f^I_{m,n} C_{II} -\sum_{(m',n') \in \mathcal{N}(m,n)} f^I_{m',n'}D_{II}\right).
\end{align}

\begin{lem}
Suppose that the reduced quadratic model has a unique solution
$\mathbf{f}$ when
$\tau_{\mathcal{R}}=0$. Then for sufficiently small
$\tau_{\mathcal{R}}> 0$, equation \eqref{quadratic} admits a solution
near $\mathbf{f}$. 
\end{lem}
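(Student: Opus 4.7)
The plan is to recast equation \eqref{quadratic} as a parameter-dependent zero-finding problem and invoke the implicit function theorem. Collect the unknowns $f^E_{m,n}, f^I_{m,n}$ into a single vector $\mathbf{f} \in \mathbb{R}^{2MN}$, and define a map $G: \mathbb{R}^{2MN} \times \mathbb{R} \to \mathbb{R}^{2MN}$ whose components are the signed differences of the two sides of \eqref{quadratic}. Since every entry of $G$ is a polynomial in the coordinates of $\mathbf{f}$ and in $\tau_{\mathcal{R}}$, the map $G$ is $C^{\infty}$, and by hypothesis $G(\mathbf{f}^*, 0) = 0$, where $\mathbf{f}^*$ denotes the unique solution at $\tau_{\mathcal{R}} = 0$.

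The key step is the computation of the partial Jacobian $D_{\mathbf{f}} G(\mathbf{f}^*, 0)$. Setting $\tau_{\mathcal{R}} = 0$ collapses the factor $1 - \tau_{\mathcal{R}} f$ in \eqref{quadratic} to $1$, so the quadratic system degenerates exactly into the linear system $\mathbf{A}\mathbf{f} = \mathbf{b}$ of Section \ref{51}. Consequently $G(\cdot, 0)$ is affine with leading matrix (up to sign) equal to $\mathbf{A}$, and $D_{\mathbf{f}} G(\mathbf{f}^*, 0) = \mathbf{A}$. The hypothesis that the $\tau_{\mathcal{R}} = 0$ problem has a unique solution, combined with the fact that the system is square ($2MN$ equations in $2MN$ unknowns), forces $\mathbf{A}$ to be invertible.

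Once invertibility is in hand, the implicit function theorem produces a $\tau_0 > 0$ and a $C^{\infty}$ curve $\tau_{\mathcal{R}} \mapsto \mathbf{f}(\tau_{\mathcal{R}})$, defined for $|\tau_{\mathcal{R}}| < \tau_0$, with $\mathbf{f}(0) = \mathbf{f}^*$ and $G(\mathbf{f}(\tau_{\mathcal{R}}), \tau_{\mathcal{R}}) = 0$ throughout. For each such $\tau_{\mathcal{R}} > 0$, the vector $\mathbf{f}(\tau_{\mathcal{R}})$ is the desired solution of \eqref{quadratic} lying near $\mathbf{f}^*$.

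There is no genuinely hard analytic step in this argument; it is a textbook IFT perturbation. The only point that must be checked carefully is the equivalence \emph{unique solution of a square linear system $\iff$ coefficient matrix invertible}, which converts the uniqueness hypothesis of the lemma into the nondegeneracy hypothesis of the IFT without any extra structural assumption on the local and long-range connectivity constants.
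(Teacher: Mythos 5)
Your proof is correct, but it takes a different route from the paper's. The paper writes the system as $\mathbf{A}\mathbf{x}+\tau_{\mathcal{R}}f(\mathbf{x})=\mathbf{b}$, shifts coordinates to the linear solution $\mathbf{f}$, and applies the Poincar\'e--Miranda theorem to the map $g(\mathbf{x})=\mathbf{x}+\tau_{\mathcal{R}}\mathbf{A}^{-1}f(\mathbf{x}+\mathbf{f})$ on the fixed hypercube $[-1,1]^{2MN}$, checking the sign conditions on opposite faces once $\tau_{\mathcal{R}}$ is small enough that $\|\mathbf{A}^{-1}f(\cdot+\mathbf{f})\|<1$ there; this yields existence of a zero, hence a solution of \eqref{quadratic}. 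You instead invoke the implicit function theorem at $(\mathbf{f},0)$, using the same structural observation that the $\tau_{\mathcal{R}}=0$ system is exactly the affine system with matrix $\mathbf{A}$. Two points of comparison are worth noting. First, you extract invertibility of $\mathbf{A}$ from the lemma's uniqueness hypothesis (unique solvability of a square affine system forces a trivial kernel), whereas the paper's proof simply \emph{assumes} $\mathbf{A}$ invertible, a hypothesis not literally present in the lemma statement; your reduction closes that small gap. Second, the IFT gives more: a $C^{\infty}$ branch $\tau_{\mathcal{R}}\mapsto\mathbf{f}(\tau_{\mathcal{R}})$ converging to $\mathbf{f}$ as $\tau_{\mathcal{R}}\to 0$, together with local uniqueness, while the Poincar\'e--Miranda argument only locates some solution in a neighborhood of fixed unit size. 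The topological argument would be the more robust tool if the linearization were allowed to degenerate, but since nondegeneracy is forced here by the hypothesis, nothing is gained by avoiding the IFT.
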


\begin{proof}
Using notations from linear reduced model, we can write the above quadratic system as 
\begin{equation}
\mathbf{A}\mathbf{x}+ \tau_{\mathcal{R}} f(\mathbf{x}) = b,
\end{equation}
where $\tau_{\mathcal{R}}  f(\mathbf{x})$ corresponds to the small perturbation of quadratic terms. Assuming that $\mathbf{A}$ is invertible and the linear system has a solution, we can define a new function on $\mathbb{R}^{2MN}$ as 
\begin{equation*}
\label{g}
g(\mathbf{x}) = \mathbf{A}^{-1} \left( \mathbf{A}\mathbf{x} +
  \tau_{\mathcal{R}}  f(\mathbf{x}+ \mathbf{f}) -\mathbf{b} \right) +
\mathbf{A}^{-1}\mathbf{b} = \mathbf{x} + \tau_{\mathcal{R}}
\mathbf{A}^{-1} f(\mathbf{x}+ \mathbf{f}), 
\end{equation*}
where $\mathbf{f}$ is the solution when $\tau_{\mathcal{R}} = 0$. Notice that
\eqref{g} is the identity function when $f(\mathbf{x}) = 0$. Consider the
function $f$ within the hypercube $[-1,1]^{2MN}$, we have $\|f(\mathbf{x}+ \mathbf{f})\|
<c $ for some constant $c$ depending on
parameters. Therefore, for sufficiently small $\tau_{\mathcal{R}}$,
$\|\mathbf{A}^{-1} f(\mathbf{x}+ \mathbf{f}) \| <1$ for all $\mathbf{x} \in [-1,1]^{2MN}$. This means that
for all $\hat{\mathbf{x}}:=(x_1, \ldots, x_{i-1}, -1, x_i,\ldots, x_{2MN}),
\text{ and } \tilde{x} :=  (x_1, \ldots, x_{i-1}, 1, x_i,\ldots,
x_{2MN}) \in [-1,1]^{2MN}$, where $i \in \{1,2, \ldots, 2MN\}$, 
\begin{align*}
& g_i(\hat{\mathbf{x}}) = -1 + \tau_{\mathcal{R}} A^{-1}
  f(\hat{\mathbf{x}}+ \mathbf{f})(i) < 0,\\
& g_i(\tilde{\mathbf{x}}) = 1 + \tau_{\mathcal{R}}  A^{-1} f(\tilde{\mathbf{x}} + \mathbf{f})(i) > 0.
\end{align*}
By Poincare-Miranda theorem, which is a generalization of the intermediate value theorem, $g(\mathbf{x})$ has a zero $\mathbf{x}^*$ in the hypercube $[-1,1]^{2MN}$. Using the substitution $\mathbf{y} = \mathbf{x}^* + \mathbf{f}$, we can easily derive that $\mathbf{A}\mathbf{y}+\tau_{\mathcal{R}} f(\mathbf{y}) = \mathbf{b}$.
\end{proof} 
\subsection{Analysis and Comparison}
\label{53}
We compare the firing rate predictions of the reduced linear and
quadratic models against our stochastic network model in all five chosen
networks with varying degrees of synchronization. For the sake of
simpler notation, we denote the firing rate from stochastic model by $f^Q_{m,n}$, and
that from linear and quadratic models by $\tilde{f}^Q_{m,n}$ and
$\hat{f}^Q_{m,n}$ respectively, where $Q \in \{E,I\}$. We define the
mean relative errors of linear and quadratic predictions to be  
\begin{equation*}
\tilde{REL}^Q := \frac{1}{MN}\sum_{m,n} \frac{f^Q_{m,n} - \tilde{f}^Q_{m,n}}{f^Q_{m,n}}, \qquad \hat{REL}^Q := \frac{1}{MN}\sum_{m,n} \frac{f^Q_{m,n} - \hat{f}^Q_{m,n}}{f^Q_{m,n}},
\end{equation*}
respectively for $Q \in \{E,I\}$. Note that we do not take absolute
value because we would like to discuss the overestimate and
underestimate of the mean-field approximations later in this section. Figure \ref{compare} plots the
relative errors in all five networks in the sequence of {\bf HOM},
{\bf REG1},{\bf REG2}, {\bf REG3}, and {\bf SYN} from left to right. We have two observations
from Figure \ref{compare}: (i) the linear approximation
$\tilde{f}^Q_{m,n}$ is always smaller than the quadratic approximation
$\hat{f}^Q_{m,n}$, and (ii) the quadratic approximation tends to
underestimate the mean firing rate when the partial synchronization is weak
and overestimate when a strongly drived network is very synchronized. 

\begin{figure}[h]
\centerline{\includegraphics[width = 1.0 \linewidth]{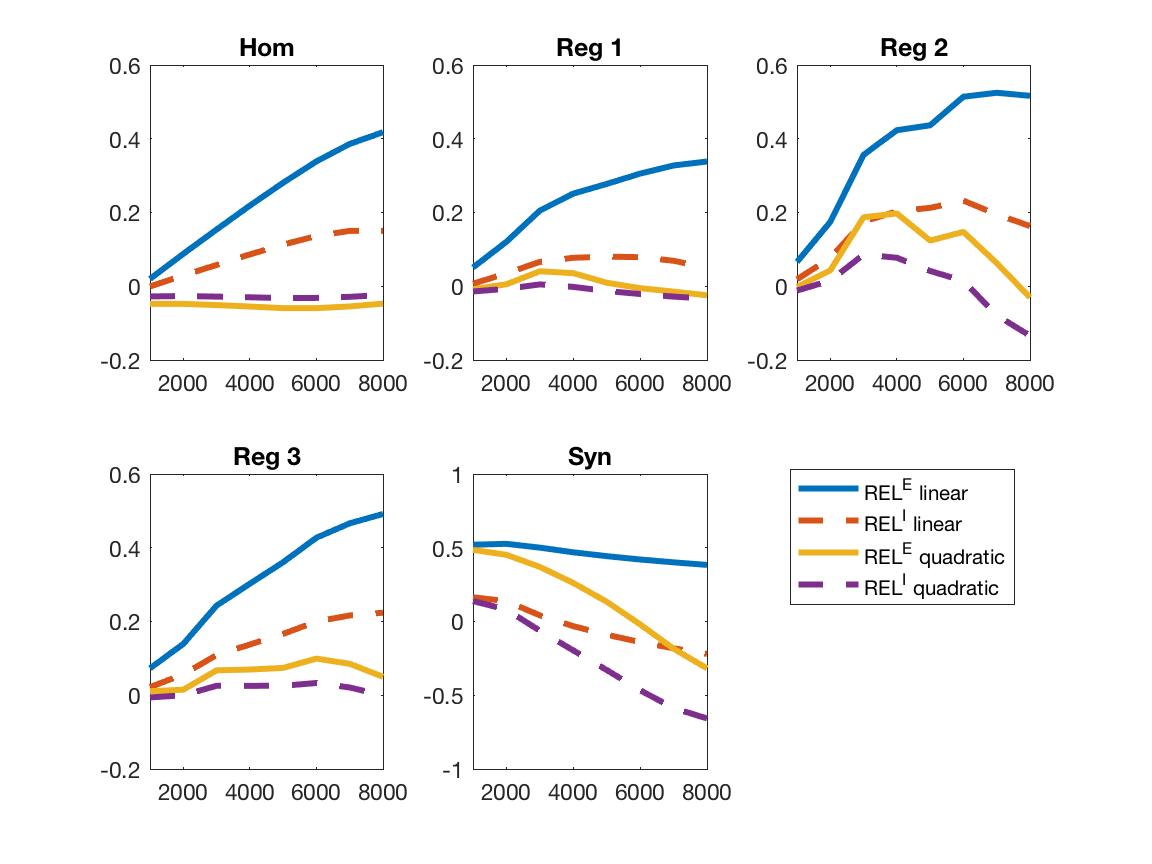}}
\caption{Relative errors of excitatory and inhibitory firing rate
  predictions for both linear and quadratic models in all five
  networks. The sequence of networks are HOM, REG1, REG2, REG3, and
  SYN from left to right. The driving rate increases from $\lambda =
  1000$ spikes/sec to $8000$ spikes/sec. } 
\label{compare} 
\end{figure}

The mechanism of the first observation is simple. Similarly as in our
previous paper \cite{li2017well}, the
inhibitory firing rate is always significantly higher than the excitatory
firing rate, leading to a larger fraction of inhibitory kicks being
missed during refractory than excitatory kicks. This causes the neuron
system to be more excited in the quadratic model than in the linear model
without refractory. 

It remains to discuss the discrepancy between the empirical mean
firing rate and the mean-field approximation in each network.  We
found that the reduced quadratic model gives decent approximation when
the network has weak synchronizations, i.e.,  examples {\bf HOM}, {\bf
  REG1}, and
{\bf REG3}. On the other hand, when the network becomes more synchronous, one observes significant
discrepancy between the network model and its mean-field approximation
(examples {\bf REG2} and {\bf SYN}). Different from the numerical result in
\cite{li2017well}, the reduced quadratic model overestimates the mean
firing rate when the network is in a strong synchronization. This can
be seen from the plot of {\bf SYN} network and {\bf REG2} network with strong
driving in Figure \ref{compare}.

We conclude that such discrepancy is caused by the
partial synchronous spiking activity during multiple firing
events. The derivation of both mean-field models relies on the assumption that the arrival
of postsynaptic kicks is homogeneous in time, which is clearly
violated during synchronous spiking activities. Right after a multiple
firing event, many neurons will stay
at $\mathcal{R}$ and be irresponsive to incoming postsynaptic kicks. As a result, a
disproportionately large fraction of postsynaptic kicks are missed in
a few ms after a large spiking volley. As demonstrated in Figure
\ref{missing}, the percentage of missed synaptic input is higher
than the percentage of time spend in refractory in all network
examples. This ``additional fraction'' of missing spike is not
negligible in network {\bf REG2} and significant in network {\bf SYN}.  

\begin{figure}[htbp]
\centerline{\includegraphics[width = \linewidth]{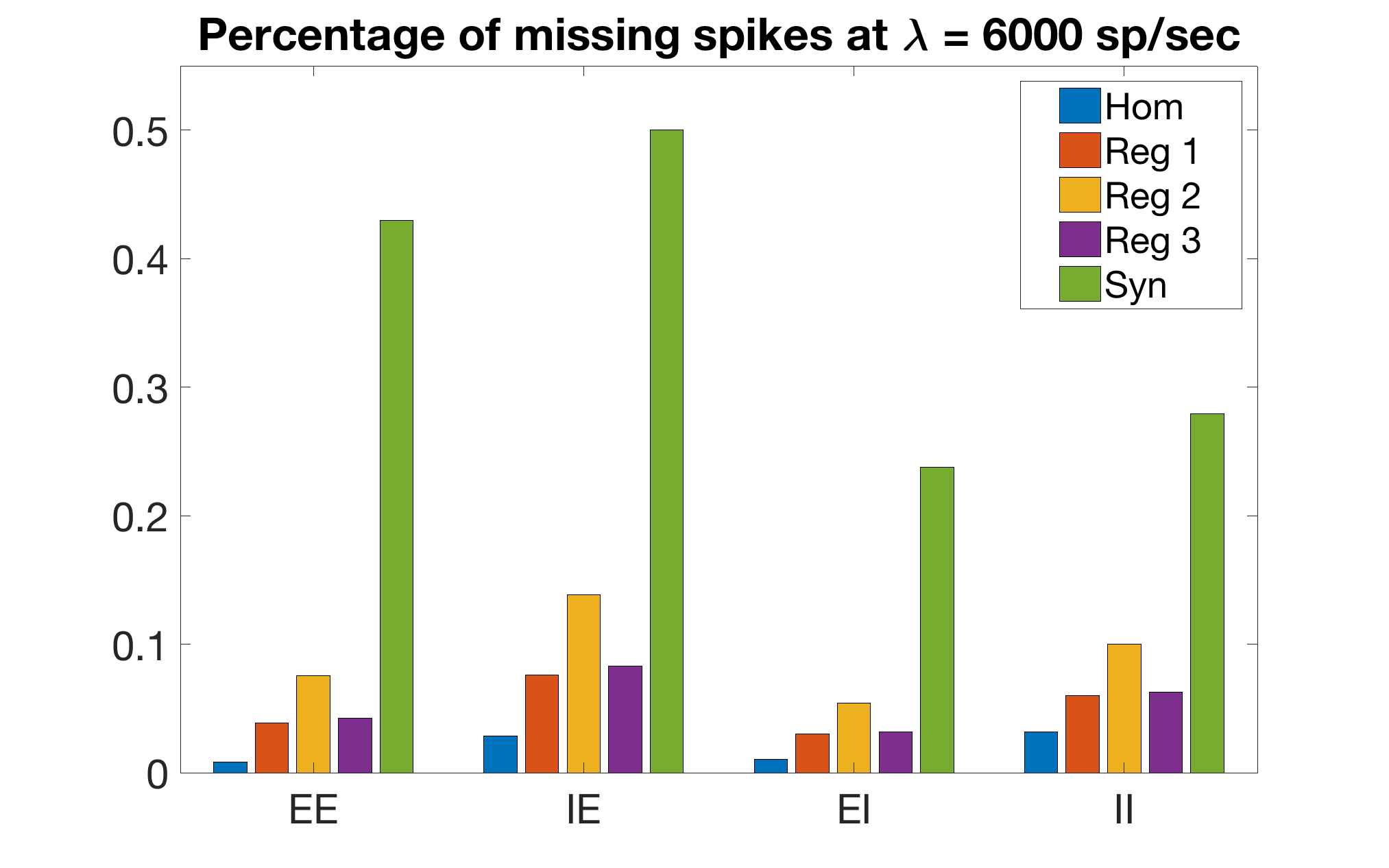}}
\caption{The ``additional fraction'' of missing E-E, E-I, I-E, and I-I
  spikes in all five
  network examples. ``Additional fraction'' means the percentage of
  missing $Q_{1}-Q_{2}$ spikes subtracts the average percentage of time that a
  postsynaptic neuron of the type $Q_{2}$ spends at the refractory,
  where $Q_{1}, Q_{2} \in \{ E, I\}$. All missing spike percentages are averaged
  over$9$ local populations.}
\label{missing}
\end{figure} 

For $Q_{1}, Q_{2} \in \{ E, I\}$, let $\epsilon^{Q_{1}Q_{2}}_{m,n}$ be the percentage of ``additional''
missing spikes, which means that the average percentage of time duration
for neuron $Q_{2}$ staying at the refractory is subtracted from the
empirical missing spikes proportions. Let $\Delta F^{E}_{m,n}$ be the
net gain of excitatory current as compared
with the reduced model. In the regime when the quadratic
approximation $\hat{f}^Q_{m,n}$ remains a good approximation of the
network firing rate $f^{Q}_{m,n}$, we have
\begin{eqnarray*}
  \Delta F^{E}_{m,n} &\approx& \left( \epsilon^{EI}_{m,n} f^{I}_{m,n}C_{EI} +
  \sum_{(m',n') \in \mathcal{N}(m,n)} \epsilon^{EI}_{m',n'} f^{I}_{m',n'}D_{EI} \right )\\
&& - \left ( \epsilon^{EE}_{m,n} f^{E}_{m,n}C_{EE} +
  \sum_{(m',n') \in \mathcal{N}(m,n)} \epsilon^{EE}_{m',n'} f^{E}_{m',n'}D_{EE}  \right) \,.
\end{eqnarray*}
Similarly, we have the net gain of
inhibitory current is given by
\begin{eqnarray*}
  \Delta F^{I}_{m,n} &\approx& \left( \epsilon^{II}_{m,n} f^{I}_{m,n}C_{II} +
  \sum_{(m',n') \in \mathcal{N}(m,n)} \epsilon^{II}_{m',n'} f^{I}_{m',n'}D_{II} \right )\\
&& - \left ( \epsilon^{IE}_{m,n} f^{E}_{m,n}C_{IE} +
  \sum_{(m',n') \in \mathcal{N}(m,n)} \epsilon^{IE}_{m',n'} f^{E}_{m',n'}D_{IE}  \right)
   \,. 
\end{eqnarray*}

Note that the missing percentage of excitatory postsynaptic kicks is usually
higher than that of the I kicks, partially due to the longer synapse
delay time of I-kicks. Put empirical missing percentages and constants in all five network examples into expressions $\Delta
F^{E}_{m,n}$ and $\Delta
F^{I}_{m,n}$. We can see that when $f_{I}^{m,n}$ is significantly
larger (at least $1.5$ times larger) than $f_{E}^{m,n}$, we have
positive net gain for both empirical $E$ and empirical $I$ current, corresponding to the
underestimate of the quadratic model.

However, when the network is very synchronous, all neurons spike in a
semi-periodic way, and $\hat{f}^Q_{m,n}$ can be very far away from
$f^Q_{m,n}$. In this regime, we find that the firing rate is simply
approximated by 
$$
  \bar{f}^{E}_{m,n} = \bar{f}^{I}_{m,n} \approx
  \frac{1}{\frac{M}{\lambda_{m,n}} + \tau_{\mathcal{R}}} \,,
$$
where $\lambda_{m,n} = \lambda^{E}_{m,n} = \lambda^{I}_{m,n}$. In
other words, in network {\bf SYN}, the firing activity is so synchronized
that most neurons participate in a multiple firing event, and restart
from the refractory right after it. The accuracy of this estimate is
presented in Figure \ref{periodic}, in which we calculate the mean
relative error $\overline{REL}^{Q}$ in the same way as calculating
$\tilde{REL}^Q$ and $\hat{REL}^Q$ and plot it at different drive
rates. We can see that $\overline{REL}^{Q}$ gives a better approximation of the mean firing rate of SYN
network, especially for excitatory local populations.  Comparing $\bar{f}^{Q}_{m,n}$ with
$\hat{f}^{Q}_{m,n}$, we find that the quadratic formula underestimates
the mean firing rate at low drive, and overestimates it at high
drive. This mechanism also partially explains the overestimate of
the quadratic formula in network {\bf REG2} at high drive rate, at which 
synchronization is also very significant.  

\begin{figure}[htbp]
\centerline{\includegraphics[width = 0.6\textwidth]{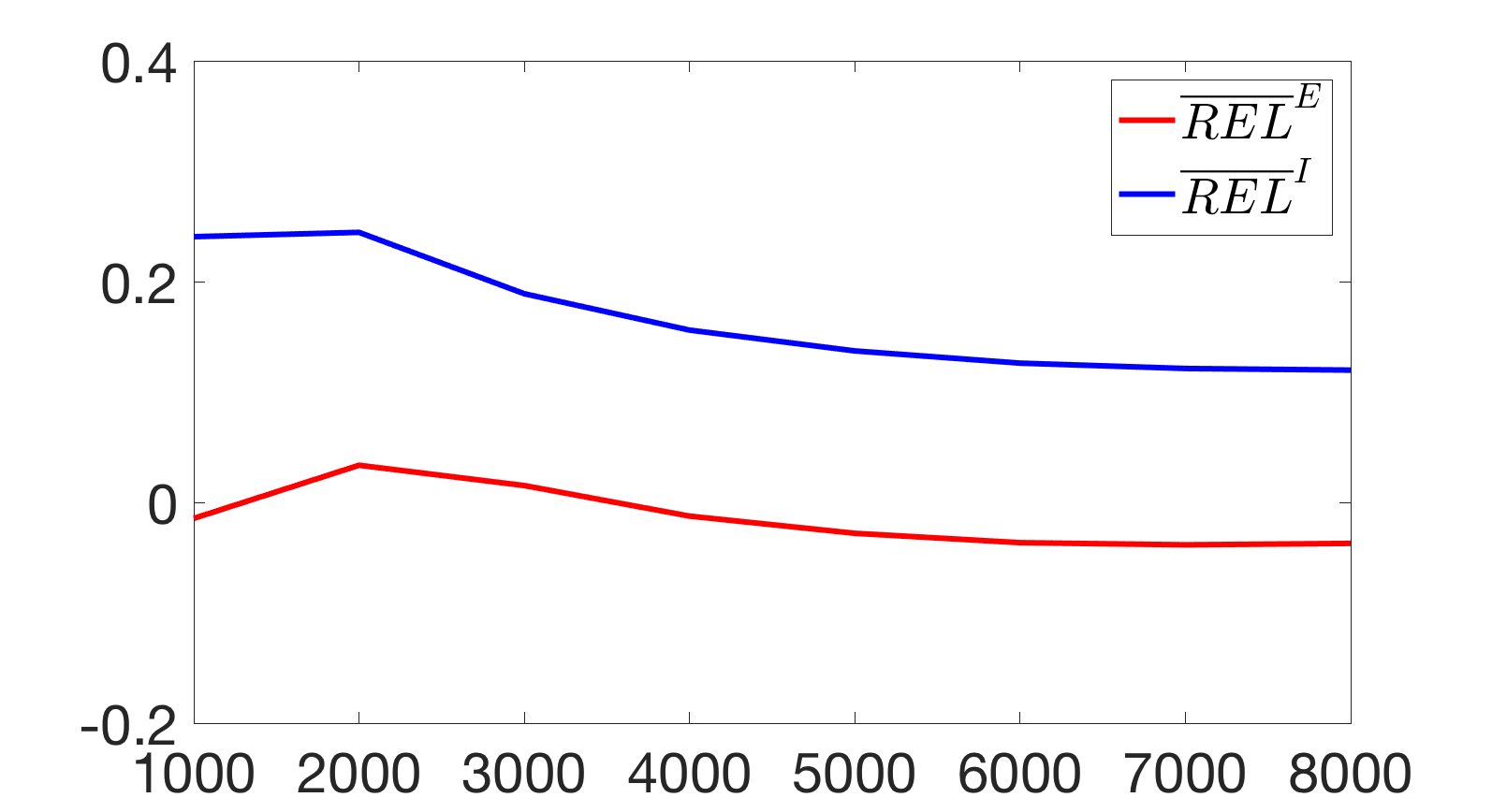}}
\caption{The total relative error $\overline{REL}^{E}$ and
  $\overline{REL}^{I}$ by assuming that \label{fig:3}the network is periodic.}
\label{periodic}
\end{figure}

Further numerical analysis shows that this $\bar{f}^{Q}_{m,n}$
actually has overestimate and underestimate that
significantly cancel each other. $\bar{f}^{Q}_{m,n}$ overestimates the
mean firing rate in a way that many pending inhibitory kicks can
survive after the refractory, and underestimates the mean firing rate
because a synchronous spiking event occurs when {\it some} membrane
potential reaches $M$, at which time the mean membrane potential is
still well below $M$. We note that such analysis for a very synchronized network is
not given in our previous paper \cite{li2017well}, as the
network {\bf SYN} in this paper is much more synchronous than examples there.

\section{Spatial correlation of spike volleys}
The semi-synchronized bursts of neuron spikes, i.e., the multiple firing events, observed in Figure
\ref{reg} are consistent with our experimental results and numerical
results \cite{chariker2015emergent,
  rangan2013dynamics, rangan2013emergent}. The scale of a multiple
firing event and the time interval between two consecutive events may vary. This is an
emergent phenomenon that is different from the total synchronization
studied in \cite{borgers2003synchronization, borgers2005effects, borgers2005background}. It is believed that such semi-synchronized burst is
related to the gamma rhythm in our brain \cite{borgers2005background,
  henrie2005lfp}, and is resulted by the interplay of excitatory and inhibitory
populations, which is a milder version of the PING mechanism \cite{borgers2003synchronization}. Inhibitory (GABAergic) synapses in a population usually
act a few milliseconds more slowly than excitatory (AMPA) synapses. As a
result, an excitatory spike will excite many postsynaptic neurons
quickly and form a cascade, which will be terminated when the pending
inhibitory kicks take effect. In \cite{li2017well}, we have found that the
degree of synchronization is extremely sensitive with respect to small
changes of the synapse delay times $\tau_{E}$ and $\tau_{I}$. However,
very limited mathematical justification is available so far.

One salient phenomenon observed in our numerical simulations is that
spike volleys generated by different local populations are
correlated. We call such correlated spiking activity among different
local populations the {\it spatial correlation}. One interesting
observation is that under ``reasonable''
parameter settings, this correlated spiking activity can only spread
to several blocks away. The aim of this
section is to investigate two questions: {\bf (i)} What is the mechanism of this
spatial correlation? and {\bf (ii)} How far away could
this spatial correlation spread? We will describe our numerical
results about spatial correlation in Section \ref{6-1} and
\ref{6-2}. The mechanism of spatial correlation will be studied in
Section \ref{6-3}. A study of the mechanism of correlation decay is
provided in Section \ref{6-4}. 

\subsection{Quantifying spatial correlations}
\label{6-1}

The quantification of spatial correlation relies on the ergodicity of
the Markov process. It follows from Corollary \ref{cor2} that for any two local populations $(m,n)$ and $(m', n')$
and any $Q_{1}, Q_{2} \in \{E, I\}$,
we have well-defined and computable covariance $\mbox{cov}_{T}^{Q_{1}, Q_{2}}(m,n;
m', n')$ and Pearson' correlation coefficient $\rho^{Q_{1},
  Q_{2}}_{T}(m,n,m',n')$. By Corollary \ref{cor3}, the covariance $\mbox{cov}_{T}(m,n,m',n')$ and correlation coefficient
$\rho_{T}(m,n,m',n') $ for the total spike count between two local
population (regardless the spike type) are also well defined and
computable. 

We remark that sometimes it makes sense to have different ``resolutions'' at different spike
counts. For example, whether a local population fires $5$ spike or $10$
spikes during a $10$ ms time window makes qualitative difference because
we may think $10$ spikes in such a time window gives a multiple firing
event. But whether a local population fires $295$ or $300$ spikes in
the same time window is less important. To address this, we can
prescribe a mapping on the spike count during $[0, T]$. Let 
$\bm{\xi} = \{\xi_{1}, \cdots, \xi_{k}\} \subset \mathbb{N}_{+}$ be a
``dictionary''. Let $f_{\bm{\xi}}: \mathbb{N_{+}} \mapsto \{1, \cdots,
k+1\}$ be a function such that 
$$
  f_{\bm{\xi}}(n) = \left \{
\begin{array}{ccc}
 \max \{ 1 \leq i \leq k \,|\,  n \leq \xi_{i} \} & \mbox{ if } & n \leq
                                                             \xi_{k}\\
k+1 & \mbox{ if } & n > \xi_{k}
\end{array}
\right .
$$
When starting from the steady state $\pi$, we have random
variables $f_{\bm{\xi}}(N^{Q}_{m,n}([0, T]))$ representing the mapping
of the spike count on $[0, T]$. The we can define the covariance
$\mbox{cov}_{T, \bm{\xi}}^{Q_{1},
      Q_{2}}(m,n,m',n')$ and the Pearson correlation coefficient
    $\rho^{Q_{1}, Q_{2}}_{\bm{\xi}, T}(m,n,m',n')$ in an
    analogous way. One advantage of using $f_{\bm{\xi}}$ to define the
    correlation is that $|\bm{\xi}|$ can be much smaller than $N_{E}$
    and $N_{I}$. Hence the estimates of covariance and correlation
    coefficient can be more accurate.

It remains to comment on the size of a time window $T$. An ideal time window
should be large enough to contain a spike volley, but not as large as
the time between two consecutive time spike volleys. There is no silver
lining of time-window size that fits all parameter sets. A (very) rough estimate
is that $T$ should be greater than the maximum
of $N_{Q}$ i.i.d exponential random variables with mean $\tau_{Q}$,
but less than $M/\lambda^{Q}_{m,n} + \tau_{\mathcal{R}}$. It is well known that the maximum
of $N_{Q}$ i.i.d exponential random variables with mean $\tau_{Q}$,
denoted by $Z$, can
be represented as the sum of independent exponential random variables
$$
  Z = W_{1} + \cdots + W_{N_{Q}} \,,
$$
where $W_k$ has mean $\frac{\tau_Q}{k}$. Hence the expectation
of $Z$ is $\tau_Q H_{N_Q}$, where $\{H_{n}\}$ is the Harmonic number
$$
  H_{n} = \sum_{k = 1}^{n} \frac{1}{k} \,.
$$
It is clear that $H_{N_{Q}} \tau_{Q}$  overestimates because a spike
volley may not involve all neurons. At the same time, it also underestimates because it
takes some time for the cascade of excitatory spikes to excite all
neurons participating in a spike volley. But our simulations shows
that the qualitative properties of the spatial correlation is not very
sensitive with respect to the choice of time-window size. For the
parameters of a ``{\bf regular}'' network, we have $H_{300}\tau_{EE}
\approx 10$ ms. Also we have $M/\lambda_{E} +
\tau_{\mathcal{R}}\approx 19$ ms if $\lambda_{E} = 6000$ (strong
drive). Hence we choose $T = 15$ ms in our simulations in the next
subsection.

\subsection{Spatial correlation decay}
\label{6-2}

Our first key observation is that in many settings, the spatial
correlation decays quickly when two local populations are further
apart. The aim of this subsection is to describe this numerical
finding. We will address possible mechanisms of spatial correlation
and spatial correlation decay in the following two subsections. 

As discussed in the last subsection, we choose $T = 15$ ms as the size
of a time window. Since the qualitative result for the spatial
correlation among E-E, E-I .etc are the same, we select
$\rho_{T}(m,n,m',n')$ as the metrics of the spatial correlation. The
cases of $\rho^{E,E}_{T}(m,n,m',n')$ have little difference. 

In order to effectively simulate large scale neural fields in which two local populations can
be far apart, we choose to study an $1$-D network with $M = 1$. The
length of array is chosen to be $N = 22$ in all of our simulations. We
will compare the Pearson correlation coefficient between local
populations $L_{1,2}$ and $L_{1,k}$ for $k = 2, \cdots, 21$. The
reason of doing this is to exclude the boundary effect at $L_{1,1}$
and $L_{1, N}$. In our simulation, we run $240$ independent long-trajectories of
$\Phi_{t}$. In each trajectory, spike counts in $2000$ time windows
are collected after the process is stabilized. The result of this
simulation is presented in Figure \ref{cordecay}. We find that in all five
example networks, the spike count correlation decays quickly with increasing
distance between local populations. In homogeneous network {\bf HOM},
correlation is only observed for nearest neighbor local populations. In three regular networks {\bf REG1, REG2}, and {\bf REG3}, no significant correlations are
observed when two local populations are $4-8$ blocks away. The speed
of correlation decay is higher when external drive rates have higher
difference ({\bf REG1}) and when the external connection is weaker ({\bf
  REG3}). The synchronized network {\bf SYN} has the slowest decay rate and obvious
fluctuations induced by alternating external drive rate at local populations. If one
local population models a hypercolumn in the visual cortex, our
simulation suggests that the Gamma wave does not have significant
correlation at two locations that are $2-3$ millimeters away. This is
consistent with experimental observations.

\begin{figure}[htbp]
\centerline{\includegraphics[width = \linewidth]{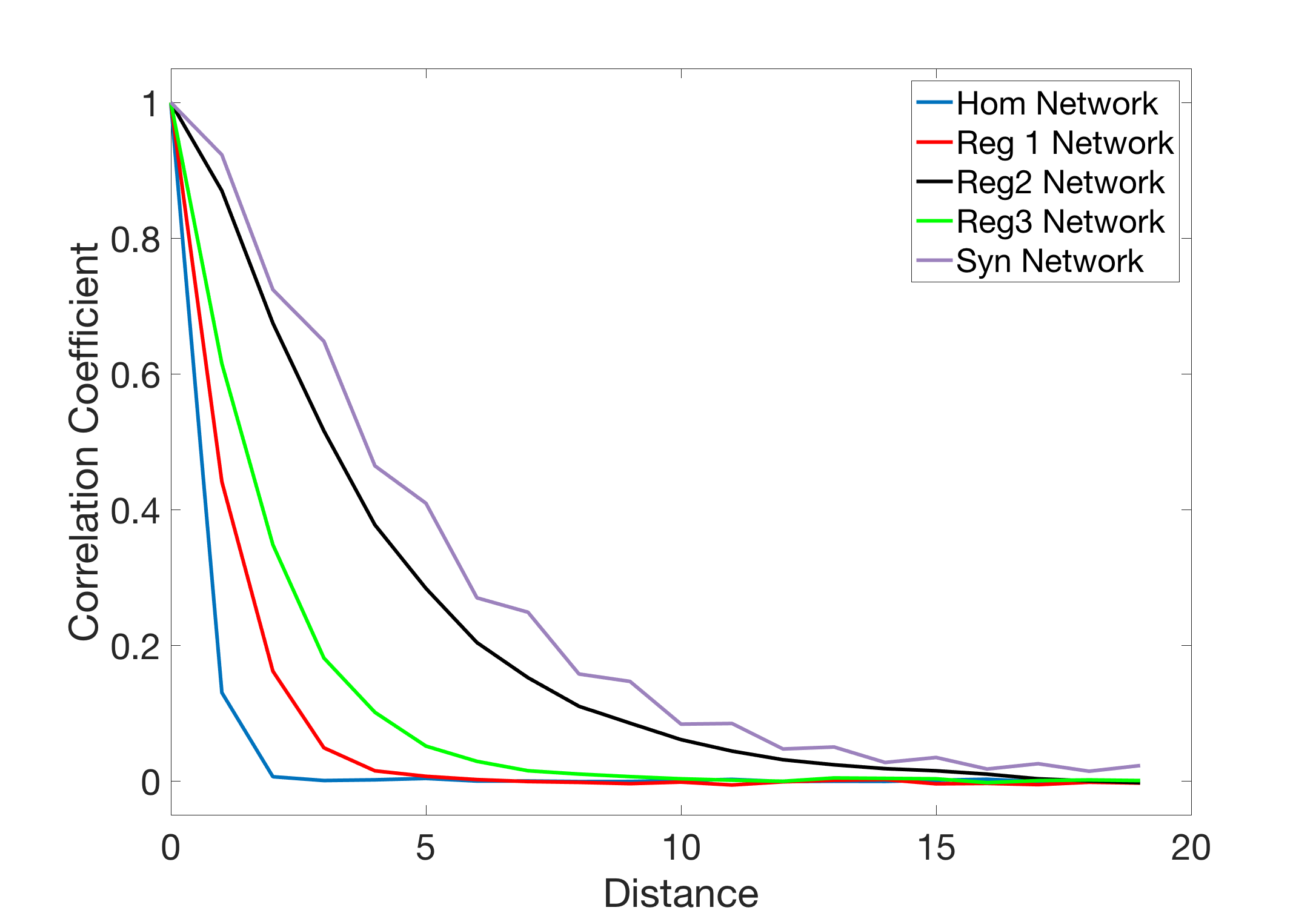}}
\caption{Change of spike count correlation coefficient with increasing
distance between to local populations. Correlation coefficients are
measured between $L_{1,2}$ and $L_{1, k}$ for $k = 2, \cdots, 21$ in
five example networks.}
\label{cordecay} 
\end{figure}

\subsection{Mechanism of spatial correlation}
\label{6-3}
The aim of this section is to investigate the mechanism of spatial
correlation, especially the spatial correlation of spike counts between two nearest
local populations. We believe that the mechanism of spatial correlation is
similar to that of a multiple firing event in a local population. The
excitatory neurons stimulate each other and form an avalanche of
spikes, which is terminated by the later arrival of inhibitory
kicks. Because both excitatory and inhibitory neurons connect to
nearest neighbors, two neighbor local populations tend to have spike
volleys at the same time. 

To better explain this dynamics, we first propose the following $6$-variable
ODE system
for a qualitative description of what happens during a spike
volley at one local population. The main changing parameters in our ODE model are $\tau_{E}$ and
$\tau_{I}$. Other parameters like $N_{E}$, $N_{I}$, $S_{EE}$, .etc are
same as in the model description. This system contains six variables
$G_{E}, G_{I}, H_{E}, H_{I}, R_{E}$, and $R_{I}$. Variables $G_{E}$
and $G_{I}$ are the number of excitatory and inhibitory neurons that
are located in the ``gate area'', which means their
membrane potentials lie within one excitatory spike from the
threshold. We denote the set of neurons in this ``gate area'' by
$G_{E}$ and $G_{I}$ when it does not lead to confusions. Variables
$H_{E}$ and $H_{I}$ are the ``effective'' number of excitatory and inhibitory
neurons who just spiked but the spikes have not taken effects
yet. If, for example, $50\%$ of postsynaptic kicks from a neuron spike have
already taken effects, the ``effective number'' of this neuron is
$0.5$. Finally, $R_{E}$ and $R_{I}$ are the number of excitatory and
inhibitory neurons that are at refractory. Since we only study the
dynamics of one spike volley, we assume that a neuron stays at
$\mathcal{R}$ after a spike. Let $c_{E}, c_{I}$ be two
parameters that will be described later, we have the following
multiple firing event model that describes the
time evolution of $H_{E}, H_{I}, G_{E}, G_{I}, R_{E}$, and $R_{I}$. 

\begin{equation}
\label{MFE}
\begin{split}
\frac{\mathrm{d} H_{E}}{\mathrm{d} t} = & - \tau_{E}^{-1} H_{E} +
                                           \tau_{E}^{-1}P_{EE}H_{E}G_{E}
  + \frac{\lambda_{E}}{S_{EE}} G_{E}\\
\frac{\mathrm{d} G_{E}}{\mathrm{d}t} =& c_{E} \max\{
                                         \tau_{E}^{-1}P_{EE}S_{EE}H_{E}
                                         + \lambda_{E}
                                         - \tau_{I}^{-1}P_{EI}S_{EI}H_{I},
                                         0\}\cdot ( N_{E} -
                                         G_{E} - R_{E}) -\\
&
                                         \tau_{I}^{-1}P_{EI}\max\{\frac{S_{EI}}{S_{EE}},
                                         1\}H_{I}G_{E} -
   (\tau_{E}^{-1}P_{EE}H_{E} + \frac{\lambda_{E}}{S_{EE}})G_{E}
                                        \\ 
\frac{\mathrm{d} R_{E}}{\mathrm{d}t}=&
                                        \tau_{E}^{-1}P_{EE}H_{E}G_{E}
                                        +  \frac{\lambda_{E}}{S_{EE}} G_{E}\\
\frac{\mathrm{d} H_{I}}{\mathrm{d} t} = & - \tau_{I}^{-1} H_{I} +
                                           \tau_{E}^{-1}P_{IE}H_{E}G_{I}
  + \frac{ \lambda_{I}}{S_{IE}}G_{I}\\
\frac{\mathrm{d} G_{I}}{\mathrm{d}t} =& c_{I} \max\{
                                         \tau_{E}^{-1}P_{IE}S_{IE}H_{E}
                                         + \lambda_{I}
                                         -
                                         \tau_{I}^{-1}P_{II}S_{II}H_{I},
                                         0\}\cdot ( N_{I} -
                                         G_{I} - R_{I}) \\
& -\tau_{I}^{-1}P_{II}\max\{ \frac{S_{II}}{S_{IE}}, 1\}H_{I}G_{I} -
   (\tau_{E}^{-1}P_{IE}H_{E} + \frac{\lambda_{I}}{S_{IE}})G_{I}\\
\frac{\mathrm{d} R_{I}}{\mathrm{d}t}=&
                                        \tau_{E}^{-1}P_{IE}H_{E}G_{I}
                                        + \frac{
                                          \lambda_{I}}{S_{IE}}G_{I}
\end{split}
\end{equation}

We note that the aim of this ODE system is to qualitatively describe the
excitatory-inhibitory interplay, instead of making any precise
predictions. The first equation describes the rate of change of
$H_{E}$, which decreases with rate $\tau_{E}^{-1}$. The source of input to
$H_{E}$ is neurons in the ``gate area'' $G_{E}$. We assume that neurons in
$G_{E}$ have uniformly distributed membrane potentials. The second
equation describes the rate of change of neurons in $G_{E}$. The
source of $G_{E}$ is neurons that are not at $G_{E}$ or $R_{E}$. We
assume that the increase rate of $G_{E}$ is proportional to both the number of
relevant neurons and the net current, if the net current is positive. The
coefficient is assumed to be a parameter $c_{E}$. We denote the coefficient of proportion as a parameter $c_E$. $G_{E}$ decreases because neurons in $G_{E}$ either spike
when receiving excitatory input or drop below the ``gate area'' when
receiving inhibitory input. This is represented by the last two terms
in the second equation. The third equation is about $R_{E}$, whose
increase rate equals the rate of producing new spikes. The case of
inhibitory neurons is analogous, represented by the last three equations about $H_{I}, G_{I}, R_{I}$, where parameter $c_I$ stands for the coefficient of net current for inhibitory neurons.

The most salient feature of this ODE system is the very sensitive dependency of
``event size'' with respect to $\tau_{E}$ and $\tau_{I}$. When
$\tau_{I}$ is much larger than $\tau_{E}$, one can expect larger
event sizes for both populations. This is demonstrated in Figure
\ref{bk1}. We assume that $\tau_{E} = 2$ ms and plot the ``event sizes''
with varying $\tau_{I}$. The initial condition is $H_{E} = 0, G_{E} =
20, R_{E} = 0, H_{I} = 0, G_{I} = 5$, and $R_{I} = 0$. We showed three
cases with varying drive rates, where $\lambda_{E} = \lambda_{I} = 0, 2000$, and $4000$. The
``event sizes'' of excitatory and inhibitory populations are $R_{E}(T)$
and $R_{I}(T)$ respectively, where $T$ is the minimum of $20$ ms and
the first local minimum of $H_{E}(t)$. The reason of looking for the
local minimum is because when the network is
driven, this ODE model might generate a ``second wave'' after the first
multiple firing event. Figure \ref{bk1} confirms two observations in
our simulation results. First, the ``event sizes'' of both excitatory
and inhibitory populations increase quickly with larger
$\tau_{I}$. Second, the network tends to have bigger multiple firing
events when it is strongly driven by external signals. 

\begin{figure}[htbp]
\centerline{\includegraphics[width = \linewidth]{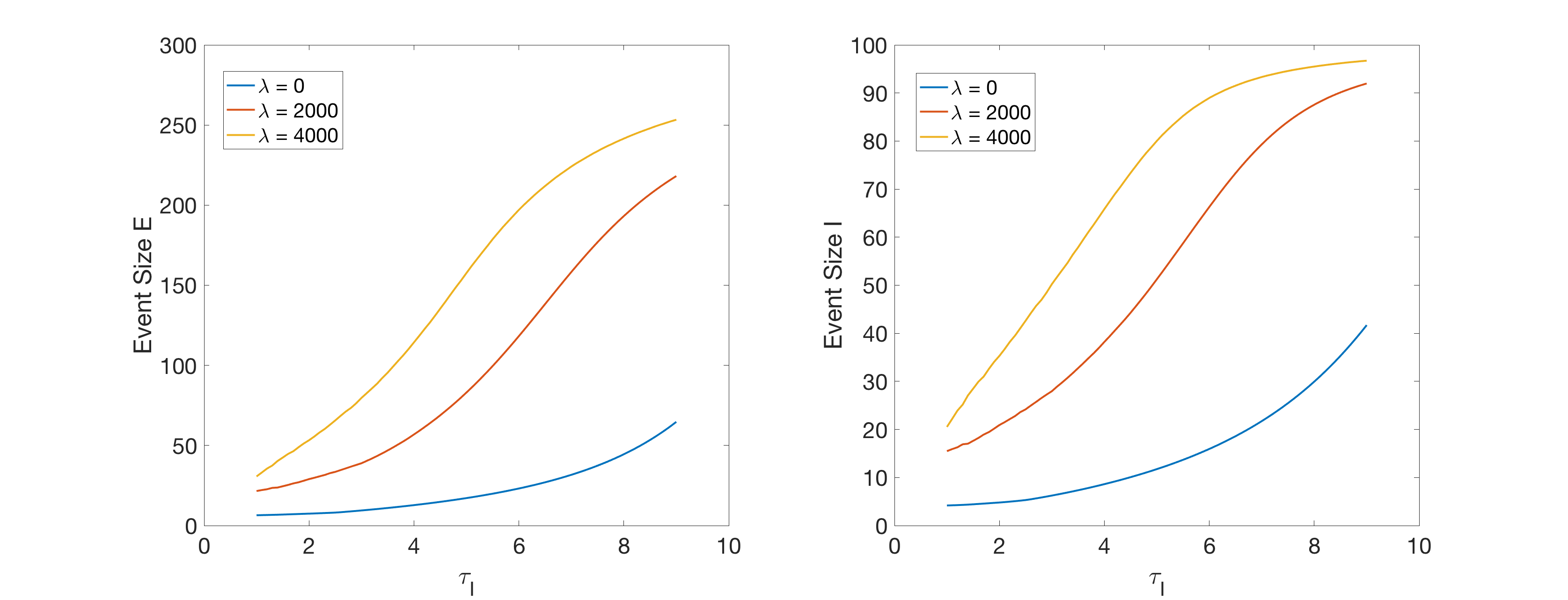}}
\caption{Event size versus $\tau_{I}$ for one local population.}
\label{bk1}
\end{figure}
In the limiting senario when $\tau_{I}^{-1}$ is very small, we have the
following theorem. 

\begin{thm}
\label{limitcase}
Assume $\tau_{E} = 1$, $\lambda_{E} = \lambda_{I} = 0$. Let
$\delta_{E} = c_{E}S_{EE}$, $\delta_{I} = c_{E}S_{IE}$, $m = \min\{\delta_{E}, P_{EE}\}$, 
$$
  \alpha = P_{EE}N_{E}^{2}e^{-N_{E}m} \,,
$$
and
$$
  \beta = N_{I}(e^{-\delta_{I}N_{E}} +
  e^{-P_{IE}N_{E}}) \,.
$$
Assume $N_{E} > m$. Let the initial condition be $(H_{0}, 0, 0, 0, 0, 0)$ for $H_{0} >
\alpha$. Then there exist constants $C$ and $T$, such that when
$\tau_{I} > C$ and $t > T$, we have $R_{E}(t) > N_{E} - \alpha$ and
$R_{I}(t) > N_{I} - \beta$.
\end{thm}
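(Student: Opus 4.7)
The plan is to set $\tau_I^{-1}=0$ in \eqref{MFE}, which both removes the inhibitory feedback on the excitatory variables and makes the two max-functions redundant (their arguments are manifestly nonnegative in that limit), then to solve the resulting limit system essentially explicitly, and finally to recover the statement for $\tau_I$ large but finite by a continuous-dependence argument.

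\textbf{Linearization of the excitatory block.} With $\tau_E=1$, $\lambda_E=0$, and $\tau_I^{-1}=0$, set $P_E := N_E - G_E - R_E$. A direct calculation gives $\dot P_E = -\delta_E H_E P_E$. The crucial device is to replace physical time by the ``event clock''
$$
\tau(t) := \int_0^t H_E(s)\,ds,
$$
which is strictly increasing as long as $H_E>0$. In this variable, writing $'$ for $d/d\tau$, the equations for $(P_E, G_E, R_E)$ become the linear planar system
$$
P_E' = -\delta_E P_E,\qquad G_E' = \delta_E P_E - P_{EE}G_E,\qquad R_E' = P_{EE}G_E
$$
with data $(N_E,0,0)$. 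Solving it explicitly, one obtains in the generic case $\delta_E\ne P_{EE}$
$$
N_E - R_E(\tau) \;=\; \frac{N_E}{P_{EE}-\delta_E}\bigl(P_{EE}\,e^{-\delta_E\tau}-\delta_E\,e^{-P_{EE}\tau}\bigr),
$$
and an analogous expression involving $N_E(1+P_{EE}\tau)e^{-P_{EE}\tau}$ in the resonant case.

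\textbf{Self-consistent event size.} Integrating $\dot H_E = H_E(P_{EE}G_E - 1)$ on $[0,\infty)$ gives $\int_0^\infty H_E\,dt = H_0 + R_E(\infty)$, so $\tau_{\max} := \lim_{t\to\infty}\tau(t)$ solves the scalar equation $\tau_{\max} = H_0 + R_E(\tau_{\max})$ and is bounded by $N_E + H_0$. Set $g(\tau) := \tau - H_0 - R_E(\tau)$; then $g(0) = -H_0 < 0$, and the hypotheses $H_0 > \alpha$ and $N_E > m$ are tailored so that $g(N_E)\ge 0$: one checks this by substituting $\tau=N_E$ in the closed-form expression above and bounding the prefactor $N_E\max(P_{EE},\delta_E)/|P_{EE}-\delta_E|$ by $P_{EE}N_E^2$ (this is where the specific form of $\alpha = P_{EE}N_E^2 e^{-mN_E}$ is needed). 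Hence the trajectory selects the large-cascade root $\tau_{\max}\ge N_E$, and plugging back into the formula yields
$$
N_E - R_E(\infty) \;\le\; \alpha,
$$
which is the excitatory conclusion.

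\textbf{Inhibitory block and perturbation.} In the same $\tau$-time and with $\tau_I^{-1}=\lambda_I=0$, the inhibitory block is driven by $H_E$ only through $\tau$ and is again linear. Setting $Q_I := N_I - G_I - R_I$ one finds $Q_I' = -P_{IE}\delta_I Q_I$, so $Q_I(\tau_{\max}) \le N_I e^{-P_{IE}\delta_I N_E}$, while $G_I(\tau)$ is an explicit combination of $e^{-P_{IE}\delta_I\tau}$ and $e^{-P_{IE}\tau}$. Adding the two contributions at $\tau_{\max}\ge N_E$ produces $G_I+Q_I \le N_I(e^{-\delta_I N_E}+e^{-P_{IE}N_E})=\beta$, hence $N_I - R_I(\infty) \le \beta$. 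Finally, for $\tau_I$ large but finite the inhibitory feedback terms in $\dot H_E$, $\dot G_E$, $\dot G_I$ are of size $O(\tau_I^{-1})$; the event is confined to the bounded $\tau$-window $[0,\tau_{\max}]\subset[0, N_E+H_0]$ (equivalently, a bounded $t$-window), and along the limit trajectory the arguments of the two max-functions are strictly positive throughout the active phase, so they may be replaced by their smooth arguments during the perturbation step. A standard Gronwall continuous-dependence estimate then shows that the perturbed trajectory is within $O(\tau_I^{-1})$ of the limit one on this window, and choosing $C$ sufficiently large preserves both inequalities $R_E > N_E-\alpha$ and $R_I > N_I-\beta$. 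The main obstacle is the transcendental fixed-point analysis of $g(\tau_{\max})=0$ yielding the precise constant $\alpha$; the nonsmoothness of the max-functions and the perturbation step itself are routine once the limit calculation is in hand.
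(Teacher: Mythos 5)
Your route is, up to relabeling, the paper's own: your event clock $\tau(t)=\int_0^t H_E\,ds$ is exactly $H_0-v_E(t)$ for the paper's variable $v_E=H_E-R_E$, the explicit formula for $N_E-R_E(\tau)$, the identity $\tau_{\max}=H_0+R_E(\tau_{\max})$, the treatment of the inhibitory block through $\int_0^\infty H_E\,ds$, and the closing continuous-dependence step all coincide with what the paper does. However, the step that locates the event size is wrong as written. First, the claim $g(N_E)\ge 0$ is false under the hypotheses: the very substitution you describe gives $N_E-R_E(N_E)\le\alpha$, hence $g(N_E)=N_E-H_0-R_E(N_E)\le\alpha-H_0<0$ since $H_0>\alpha$. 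Second, the inference is inverted: since $H_E(t)=-g(\tau(t))$ must remain positive, the dynamics stops at the \emph{first} positive root of $g$, so from $g(0)<0$ and $g(N_E)\ge 0$ you would conclude $\tau_{\max}\le N_E$, and then $R_E(\infty)=\tau_{\max}-H_0\le N_E-H_0<N_E-\alpha$ --- the opposite of the theorem. There is no ``selection of the large-cascade root''; to get a large event you must show $g$ stays negative up to a large value of $\tau$.

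The fix is the computation the paper actually performs, transported to your coordinates: evaluate $g$ at $N_E+H_0$ and at $N_E+H_0-\alpha$ rather than at $N_E$. One has $g(N_E+H_0)=N_E-R_E(N_E+H_0)\ge 0$ trivially (since $R_E\le N_E$), while the mean-value-theorem bound on $\frac{N_E}{P_{EE}-\delta_E}\bigl(P_{EE}e^{-\delta_E\tau}-\delta_E e^{-P_{EE}\tau}\bigr)$ at $\tau=N_E+H_0-\alpha>N_E$, together with $N_E m>1$, gives $g(N_E+H_0-\alpha)<0$ (this is the paper's verification that $H_E(-N_E)<0<H_E(-N_E+\alpha)$). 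Hence the terminal root satisfies $\tau_{\max}>N_E+H_0-\alpha$, and $R_E(\infty)=\tau_{\max}-H_0>N_E-\alpha$ follows directly from the fixed-point relation, with no detour through the closed form at $\tau_{\max}$. (Strictly, one should also rule out an earlier root of $g$ on $[0,N_E+H_0-\alpha)$; the paper glosses over this point as well, so I do not count it against you.) Your inhibitory estimate and the Gronwall perturbation step are fine and match the paper, modulo the paper's own notational drift between the $\delta_I$ of the statement and the $\delta_I=c_ES_{IE}P_{IE}$ used in its proof.
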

\medskip

Theorem \ref{limitcase} implies that as long as $\tau_{I}$ is sufficiently
large, even if without external drive, most
neurons will eventually spike provided that there are enough pending excitatory
spikes in the beginning. Note that $m$ is typically not a very small
number ($0.1$ in our simulations). Hence $\alpha$ and $\beta$ are both
very small numbers. 

The proof of Theorem \ref{limitcase} only contains elementary calculations, and we include it in Appendix A.

When many local populations form a $M\times N$ array, an ODE system with
$6MN$ variables can be
derived from the same approach. We include this ODE system and its
description in Appendix B. A direct analysis is too complicated to
be interesting. But the numerical result reveals the mechanism of
spatial correlation. For the sake of simplicity we consider two local
populations, say populations $L_{1,1}$ and $L_{1,2}$. Assume that the
local population $L_{1,1}$ is ready for a multiple firing event with
initial condition $H_{E} = 1, G_{E} = 30, R_{E} = 0, H_{I} = 0, G_{I}
= 10, R_{I} = 0$, while $L_{1,2}$ has a very different profile with $H_{E} = 0, G_{E} = 10, R_{E} = 0, H_{I} = 0, G_{I}
= 2, R_{I} = 0$. We further assume that $\tau_{E} = 2$ ms and $\lambda_{E} =
\lambda_{I} = 0$. It is easy to see that without $L_{1,1}$, $L_{1,2}$ will
not have any spikes because it is not driven. We compare the ``event size'' of excitatory and
inhibitory populations at $L_{1,1}$ and $L_{1,2}$, which is measured at
time $T = 20$ ms. This is demonstrated in Figure \ref{bk2}. With
strong connectivity $\mbox{ratio}_{E} = 0.15$, the multiple firing
event at local population $L_{1,1}$ will induce a multiple firing event
at local population $L_{1,2}$, even if local population $L_{1,2}$ has much fewer neurons at
the ``gate area''.

\begin{figure}[htbp]
\centerline{\includegraphics[width = \linewidth]{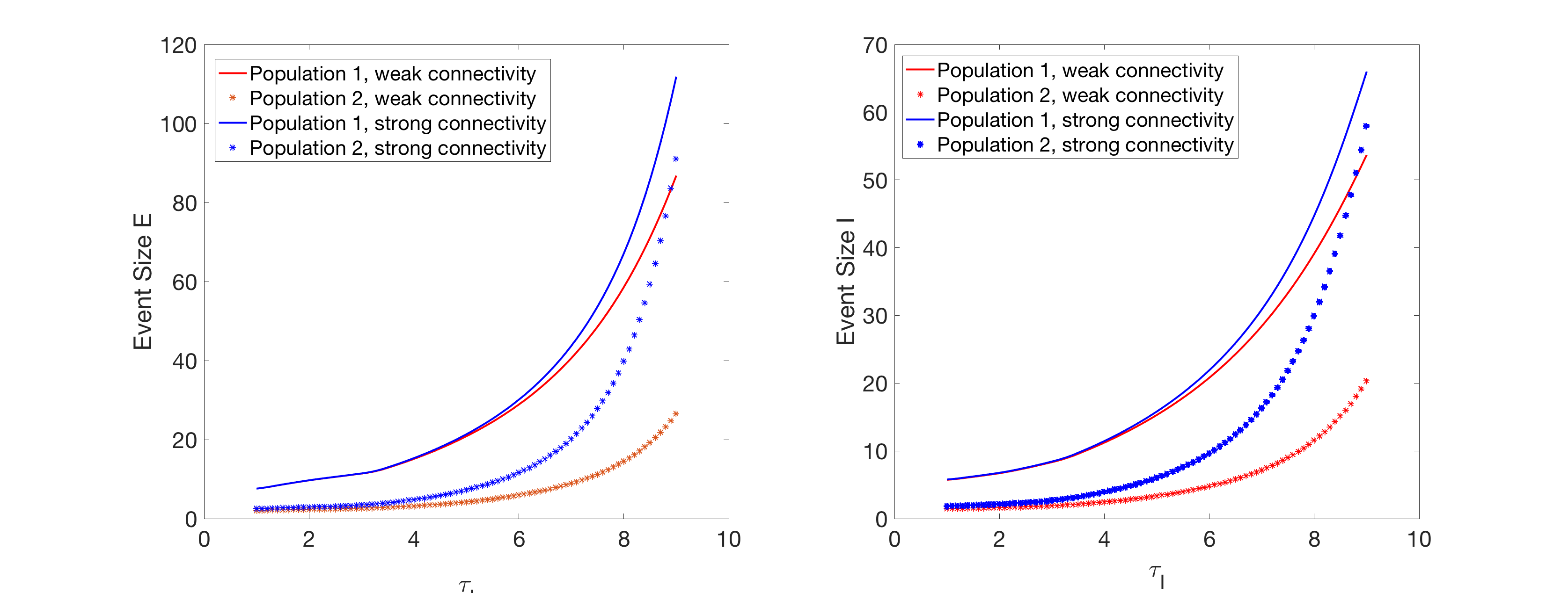}}
\caption{Event size versus $\tau_{I}$ for two local populations.}
\label{bk2}
\end{figure}

We believe that this is the mechanism of spike count spatial correlation in
our model. When a multiple firing event occurs in one local population, it
sends excitatory and inhibitory input to its neighboring local
populations. If the membrane potential of a neighboring local
population is properly distributed, a multiple firing event will be induced by
the activity at its neighbor. Similar to the case of one local
population, the size of a multiple firing event sensitively depends on the
excitatory and inhibitory synapse delay times.

\subsection{Mechanism of spatial correlation decay}
\label{6-4}
In the previous section we have shown that a strong multiple firing
event in a local population is very likely to induce a multiple firing
event at its neighboring local populations. This partially explains the
mechanism of the spatial correlation of spike volley that we have
observed. Our final task is to investigate the mechanism of spatial correlation
decay, as described in Figure \ref{cordecay}, where the spatial correlation of spike volley can only
spread to several local populations away. 

Why the spatial correlation can not spread to very far away? We
believe that (at least in this
model) such correlation decay is due to the volatility of spike count in a
multiple firing event. Since each neuron finds its postsynaptic neurons
in a random way, the spike count in a local population usually has
large variance. The variance will be even larger if the external drive rate
is heterogeneous. Therefore, even if the initial distributions of the
membrane potential and the external drive rates are identical
throughout all local populations, the voltage distribution will be
very different after the first multiple firing event. Therefore, the next
spike volley in different local populations will be less
coordinated, which destroys the synchronization. We believe this high volatility of spike volley size
significantly contributes to the spatial correlation decay. As shown
in Figure \ref{cordecay}, in examples {\bf REG1} and
{\bf REG2}, $\lambda^{E}_{m,n}$ and
$\lambda^{I}_{m,n}$ have very small difference in different local
populations. But the spatial correlation decay is still strong. This
also explains why the {\bf SYN} network has the weakest spatial
correlation decay. When the size of a spike volley is closer to the
size of the entire population, there will be much less variation in
the after-event voltage distribution. Hence the voltage distributions
in different local populations are relatively similar in a {\bf SYN}
network, which contributes to the observed slow decay of spatial
correlation. 

This explanation is supported by both analytical calculation and
numerical simulation result. We did the following numerical simulation
to investigate the volatility of ``event size''. Assume $M = N = 1$,
$\lambda_{E} = \lambda_{I} = 3000$, and the initial voltage
distribution is generated in the following way: With probability $0.2$, the neuron
membrane is uniformly distributed on $\{0, 1, \cdots, 80 \}$. With probability
$0.8$, the neuron membrane potential takes the integer part of a
normal random variable with mean $0$ and standard deviation $20$. This
initial voltage distribution roughly mimics the voltage distribution
after a large spike volley. The synapse delay times are $\tau_{E} = 2$
ms and $\tau_{I} = 1 \sim 9$ ms. For each $\tau_{I} = 1.0, 1.1,
\cdots, 9.0$, we simulate this model repeatedly for $10000$ times and
count the number of excitatory spikes of the first multiple firing
event. Then we plot the mean event size and the coefficient of
variation (standard deviation divided by mean) of the spike count
samples for each $\tau_{I}$. Note that the coefficient of
variation is a better metrics than the standard deviation as it is a
dimensionless number that measures the relative volatility of a
multiple firing event. 

This numerical result is shown in Figure \ref{CV}. We can see that when $\tau_{I}$ become larger, the event
size increases and the coefficient of variation decreases. The reason
of decreasing is because the size of a multiple firing event usually
can not be much larger than the number of neurons. Only a small number
of neurons have the chance to spike twice in a multiple firing event, even if in the
most synchronized network. The change of coefficient of variation
partially explains the observation in Figure \ref{cordecay}, in which the
decay of spatial correlation is slower when the
$\tau_{I}$-to-$\tau_{E}$ ratio is larger (means the network is more
synchronized). 

\begin{figure}[htbp]
\centerline{\includegraphics[width = \linewidth]{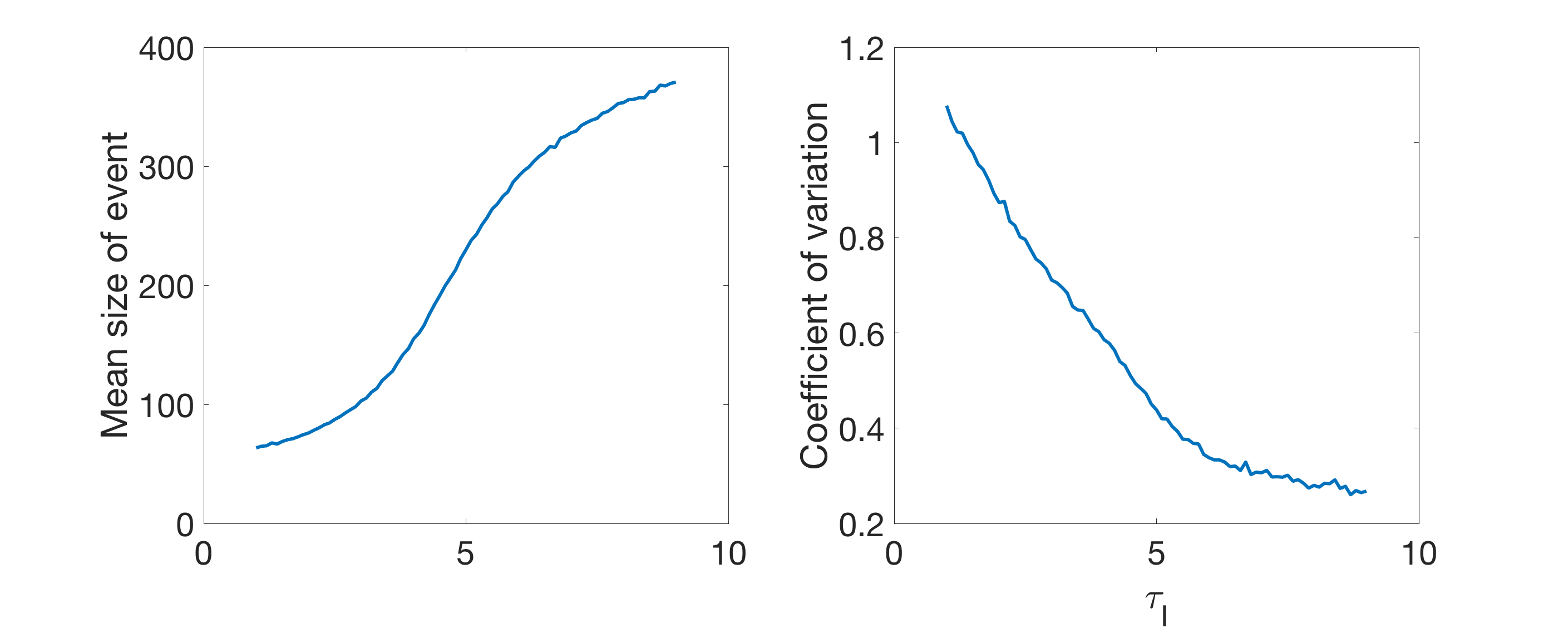}}
\caption{Left: Mean event size versus $\tau_{I}$. Right: Coefficient
  of variation versus $\tau_{I}$.}
\label{CV}
\end{figure}

It is difficult to directly study the event size on the population
model. But it is easy to build a simplified model to study the
mechanism of the high variance of multiple firing events. As explained
before, a multiple firing event is produced by the recurrent
excitation and the slower onset of the inhibition. Therefore, we
consider modeling the multiple firing event by stopping a Galton-Watson branching
process at a random time. For the sake of simplicity, we only consider
the case of $E$ population. Some calculation for a Galton-Watson
process will qualitatively explain the reason of high coefficient of
variation of sizes of multiple firing events. 

Let $X_{i} \sim B(N_{E}, p)$ be i.i.d Binomial random variables that
represent the numbers of new excitatory spike stimulated by an
excitatory spike. Let $Z_{n}$ be a branching process such that $Z_{0}
= 1$ and 
$$
  Z_{n+1} = X_{1} + \cdots + X_{Z_{n}} \,.
$$
Further let the total number of spikes before step $n$ be $S_{n} =
Z_{1} + \cdots + Z_{n}$.

Let $T$ be a positive integer-valued random variable that is
independent of all $X_{i}$ and $Z_{i}$. $T$ is the approximate onset time of
network inhibition. We further assume that $T$ has mean $\mu_{T}$ and
variance $\sigma_{T}^{2}$. The approximate event size is then $S_{T}$.
$$
  S_{T} = Z_{1} + \cdots + Z_{T} \,.
$$ 
Let 
$$
  \mbox{CV}(X) = \frac{\sqrt{\mathbb{E}[(X - \mathbb{E}[X])^{2}]}}{\mathbb{E}[X]}
$$
be the coefficient of variation of a positive-valued random variable
$X$. The following proposition is straightforward.

\begin{pro}
\label{cvbound}
Assume $T$ has finite moment generating function $M_{T}(t)$ for $t > 2
\log \mu$. Let $\sigma^{2} = N_{E}p(1-p)$ and $\mu = N_{E}p$. We have
$$
  \mbox{CV}(S_{T}) \geq \frac{\sigma\sqrt{\mu - 1}}{\mu
       \sqrt{\mu + 1}}  \,.
$$
\end{pro}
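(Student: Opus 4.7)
The plan is to first establish the pointwise bound $\mbox{CV}(S_n)^2 \geq c$ for every integer $n \geq 1$, with the explicit constant $c := \sigma^{2}(\mu-1)/[\mu^{2}(\mu+1)]$, and then lift this uniform-in-$n$ bound to the random stopping time $T$ via the law of total variance combined with Jensen's inequality. The hypothesis that $M_T(t)$ is finite for $t > 2\log\mu$ is used only to guarantee that $\mathbb{E}[\mu^{2T}]<\infty$, which is what is needed for $\mbox{Var}(S_T)$ to be finite and $\mbox{CV}(S_T)$ to be well defined.

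For the conditional step, fix $n \geq 1$. Since $\mathbb{E}[Z_i]=\mu^i$,
$$
  \mathbb{E}[S_n]=\sum_{i=1}^n \mu^i=\frac{\mu(\mu^n-1)}{\mu-1}.
$$
For the variance, a one-line tower-property calculation gives $\mbox{Cov}(Z_i,Z_j)=\mu^{j-i}\mbox{Var}(Z_i)\geq 0$ for $i\leq j$, so all generations of the Galton--Watson tree are nonnegatively correlated. Discarding the cross-terms and all but the last diagonal term in $\mbox{Var}(\sum_i Z_i)$ yields
$$
  \mbox{Var}(S_n) \geq \mbox{Var}(Z_n)=\sigma^2\mu^{n-1}\,\frac{\mu^n-1}{\mu-1}.
$$
Dividing by $(\mathbb{E}[S_n])^2$ and simplifying,
$$
  \mbox{CV}(S_n)^2 \geq \frac{\sigma^2(\mu-1)}{\mu^{3-n}(\mu^n-1)} = \frac{\sigma^2(\mu-1)}{\mu^3-\mu^{3-n}} \geq \frac{\sigma^2(\mu-1)}{\mu^2(\mu+1)} = c,
$$
where the last inequality uses $\mu^{3-n}\geq 0$ and $\mu^3\leq\mu^2(\mu+1)$ for $\mu>1$.

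To pass to the random $T$, condition on $T$ and apply the law of total variance together with the just-established pointwise bound:
$$
  \mbox{Var}(S_T)\,\geq\, \mathbb{E}[\mbox{Var}(S_T\mid T)] \,\geq\, c\,\mathbb{E}\big[(\mathbb{E}[S_T\mid T])^2\big] \,\geq\, c\,\big(\mathbb{E}[S_T]\big)^2,
$$
where the final step is Jensen's inequality (equivalently, $\mbox{Var}(\mathbb{E}[S_T\mid T])\geq 0$). Taking square roots yields the claim.

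The argument has no substantial obstacle, which matches the author's description of the proposition as straightforward. The only point worth flagging is the choice to throw away everything except $\mbox{Var}(Z_n)$ in the lower bound for $\mbox{Var}(S_n)$. One might worry that this sacrifices too much, but the resulting lower bound $\sigma^2(\mu-1)/\mu^3$ on $\mbox{CV}(S_n)^2$ is already strictly tighter than the stated bound, so no finer analysis of cross-covariances is needed; the MGF assumption enters purely as an integrability condition.
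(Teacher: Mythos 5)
Your proof is correct. It uses the same basic toolbox as the paper's argument --- nonnegative covariances between generations of the Galton--Watson tree, a lower bound on $\mbox{Var}(S_n)$ coming from a single generation's variance, the law of total variance, and Jensen's inequality --- but it reorganizes the argument in a genuinely different and cleaner way. The paper works directly with the random time: it computes $\mathbb{E}[S_T]$ and lower-bounds $\mbox{Var}(S_T)$ in terms of $M_T(\log\mu)$ and $M_T(2\log\mu)$, applies Jensen in the form $M_T(2\log\mu)\geq M_T(\log\mu)^2$, and then has to massage a ratio involving $C=M_T(\log\mu)$ down to the stated constant (a chain whose intermediate inequality $\sqrt{C^2-1}/(C-\mu^{-1})\geq\sqrt{(C+1)/(C-1)}$ is actually reversed as written, though the final conclusion survives because $C\geq\mu$). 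You instead prove the uniform deterministic-time bound $\mbox{CV}(S_n)^2\geq c$ first and then transfer it to $S_T$ by applying Jensen to $(\mathbb{E}[S_T\mid T])^2$; this decouples the combinatorics of the branching process from the randomness of $T$, uses the MGF hypothesis purely as an integrability condition, and avoids the delicate $C$-dependent manipulation entirely. The trade-off is that the paper's route keeps an explicit intermediate bound depending on $M_T(\log\mu)$, which is slightly sharper before the final weakening, whereas yours goes straight to the uniform constant; for the stated proposition your version is the tidier proof.
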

\begin{proof}
The proof follows from straightforward elementary calculations. By the property of the Galton-Watson process, we
have
$$
  \mathbb{E}[Z_{n}] = \mu^{n}
$$
and
$$
  \mbox{Var}[Z_{n}] = \mu^{n-1}\sigma^{2}(1 + \mu + \cdots +
  \mu^{n-1}) \geq \mu^{2n-2}\sigma^{2} \,.
$$
In addition, for $n \geq m$ we have
\begin{eqnarray*}
  \mbox{cov}(Z_{m}, Z_{n}) &=& = \mathbb{E}[ Z_{m}Z_{n}] -
                               \mathbb{E}[Z_{m}] \mathbb{E}[Z_{n}]\\
&=& \mu^{n - m} \mathbb{E}[Z_{m}^{2}] - \mu^{m+n}\\
&\geq& \mu^{n-m} \mathbb{E}[Z_{m}]^{2} - \mu^{m+n} = 0 \,
\end{eqnarray*}
Therefore, we have
$$
  \mathbb{E}[S_{n}] = \sum_{k = 1}^{n} \mu^{k} = \frac{\mu^{n+1} -
    1}{\mu - 1} 
$$
and
\begin{eqnarray*}
\mbox{Var}[S_{n}] & \geq  & \sum_{k = 1}^{n} \mbox{Var}[Z_{k}]  \\
&=& \sigma^{2}(1 + \cdots + \mu^{2n - 2}) = \sigma^{2}\frac{\mu^{2n} -
    1}{\mu^{2} - 1} \,.
\end{eqnarray*}

By the law of total expectation, 
$$
  \mathbb{E}[S_{T}] = \mathbb{E}[\frac{\mu^{T+1} -
    1}{\mu - 1} ] =
  \frac{\mu M_{T}(\log \mu) - 1}{\mu - 1}\,.
$$
By the law of total variance we have
$$
  \mbox{Var}[S_{T}] \geq \mathbb{E}[\mbox{Var}[S_{T}|T]] \geq
  \frac{\sigma^{2}}{\mu^{2} - 1} \mathbb{E}[ \mu^{2T} - 1] =
  \frac{\sigma^{2}}{\mu^{2} - 1} (M_{T}(2 \log \mu) - 1) \,.
$$
By Jensen's inequality 
$$
  M_{T}(2 \log \mu) = \mathbb{E}[ (\mu^{T})^{2}] \geq
  (\mathbb{E}[\mu^{T}])^{2} = M_{T}(\log \mu)^{2} \,.
$$
Let $C = M_{T}(\log \mu) > 1$. We have
\begin{eqnarray*}
  \mbox{CV}(S_{T}) &\geq& \sigma \frac{\mu - 1}{\mu \sqrt{\mu^{2} - 1}}
  \cdot \frac{ \sqrt{C^{2} - 1}}{C - \mu^{-1}}\\
&\geq&\sigma \frac{\mu - 1}{\mu \sqrt{\mu^{2} - 1}} \cdot
       \sqrt{\frac{C+1}{C - 1}} \geq \frac{\sigma\sqrt{\mu - 1}}{\mu
       \sqrt{\mu + 1}} \,.
\end{eqnarray*}
\end{proof}

Note that the effective $p$ is usually a small number such that
$N_{E}p = O(1)$. For example, if membrane potential is uniformly
distributed then the effective $\mu = N_{E}p$ is $N_{E}P_{EE}S_{EE}/M =
2.25$. This corresponds to $\frac{\sigma\sqrt{\mu - 1}}{\mu
       \sqrt{\mu + 1}} \approx 0.4$. This
branching process approximation fails when the probability of $S_{T}>
N_{E}$ can not be neglected. If this happens, the coefficient of
variation will be significantly smaller as the branching process has
to stop when reaching $N_{E}$.

\section{Conclusion}

As introduced in Section 1, we study a stochastic model that models
a large and heterogeneous brain area that contains  many local
populations. Each local population has many densely connected
excitatory and inhibitory neurons. In addition, nearest-neighbor local
populations are connected. One can treat a local population as an orientation
hypercolumn of the primary visual cortex. Similar to our previous
paper \cite{li2017well}, one salient feature of this model is the multiple
firing event, in which a proportion of neurons (but not all) in the
population spike in a small time window. Multiple firing event is a
neuronal activity that lies between synchronization and homogeneous
spiking, which is widely believed to be related to the Gamma rhythm in
the cortex. 

After proving the stochastic stability, we proposed two mean-field
approximations that comes from simple linear ODE models. Both
approximations are exactly solvable. The common assumption in
these approximations is that excitatory and inhibitory spikes are
produced in a time-homogeneous way. Then we studied the discrepancies
of these mean-field approximations. Similar as in \cite{li2017well}, a
neuron will miss many incoming postsynaptic kicks when it stays at the
refractory state. When the population has synchronized spiking
activity, a significant amount of current is missed. The composition
of excitatory and inhibitory missing current is not proportional to
the total excitatory and inhibitory current, which affects the mean
firing rate and causes discrepancies of the mean-field
approximations. In particular, when the network is very synchronized,
the mean-field approximation fails. One obtains a better firing rate
prediction by assuming that the network is totally synchronized and
periodic. 

Then we demonstrated the decay of spatial correlation in the
model. Our simulation shows that  correlated multiple firing events
can usually spread to several local populations away. This is
consistent with the physiological fact that the Gamma rhythm is
usually very local. We then constructed an ODE model to describe what
happens during a multiple firing event. This ODE model explains why a
multiple firing event in a local population could induce spike volleys
in its neighbor local populations. Further, we found that unless the
multiple firing event is so strong that it becomes a synchronized
spiking event, the spike count of a multiple firing event has a very
high diversity. As a result, voltage profiles after a multiple
firing event are very different among local populations, even if the
external drive rate is very homogeneous. This mechanism can be modeled
by a branching process that is stopped at a random time. Our numerical
simulation shows that the diversity of multiple firing event at least
partially explains the mechanism of the decay of the spatial
correlation.

\appendix
\section{Proof of Theorem \ref{limitcase}.}
\begin{proof}
Let $\delta_{E} = c_{E}S_{EE}P_{EE}$, $\delta_{I} = c_{E}S_{IE}P_{IE}$, 
$\tau_{E} = 0$, $\tau_E = 1$, $\tau_{I} =\infty$, and $\lambda_{E} = \lambda_{I } =
0$. Let $(H_{0}, 0, 0, 0, 0, 0)$ be the initial condition. Then the
ODE system becomes
\begin{eqnarray*}
\frac{\mathrm{d}H_{E}}{\mathrm{d}t} & =& - H_{E} + P_{EE}H_{E}G_{E} \\
\frac{\mathrm{d}G_{E}}{\mathrm{d}t} &=& \delta_{E}  H_{E}(N_{E} - G_{E} - R_{E}) - P_{EE}H_{E}G_{E}\\
\frac{\mathrm{d}R_{E}}{\mathrm{d}t}&=&P_{EE}H_{E}G_{E}\\
\frac{\mathrm{d}H_{I}}{\mathrm{d}t}& =& - H_{I} + P_{IE}H_{E}G_{I} \\
\frac{\mathrm{d}G_{I}}{\mathrm{d}t} &=& \delta_{I}  H_{E}(N_{I} - G_{I} - R_{I}) - P_{IE}H_{E}G_{I}\\
\frac{\mathrm{d}R_{I}}{\mathrm{d}t}&=&P_{IE}H_{E}G_{I} \,.
\end{eqnarray*}
Let $u_{E} = G_{E} + R_{E}$ and $v_{E} = H_{E} - R_{E}$, we have
\begin{eqnarray*}
\frac{\mathrm{d}H_{E}}{\mathrm{d}t} &  = & -H_{E} + P_{EE}H_{E}(u_{E} + v_{E} - H_{E}) \\
\frac{\mathrm{d} u_{E}}{ \mathrm{d} t} &=& \delta_{E}  H_{E}(N_{E} - u_{E})\\
\frac{ \mathrm{d} v_{E}}{ \mathrm{d}t} &=& -H_{E} \,, 
\end{eqnarray*}
with $H_{E}(0) = v_{E}(0) = H_{0}$ and $u_{E}(0) = 0$. Divide $\mathrm{d}u_{E}/ \mathrm{d}t$ by $\mathrm{d}v_{E}/ \mathrm{d}t$,
we have 
$$
  \frac{\mathrm{d}u_{E}}{ \mathrm{d} v_{E}} = \delta_{E}  (u_{E} - N_{E}), \quad u_{E}(H_{0})
  = 0\,.
$$
Solving this initial value problem, one obtains
$$
  u_{E} (v_{E})= N_{E}( 1 - e^{\delta_{E} (v_{E} - H_{0})}) \,.
$$
Similarly, divide $\mathrm{d}H_{E}/ \mathrm{d}t$ by $\mathrm{d}v_{E}
/\mathrm{d}t$, we have
$$
  \frac{\mathrm{d}H_{E}}{\mathrm{d}v_{E}} = H_{E} + 1 - u_{E} - v_{E}
  , \quad H_{E}(H_{0}) =
  H_{0} \,.  
$$
This is a first order linear equation. The solution is 
$$
  H_{E}(v_{E}) = N_{E} + v_{E} + \frac{N_{E}}{\delta_{E} - P_{EE}}(P_{EE}e^{\delta_{E} (v_{E} - H_{0})} -
  \delta_{E} e^{P_{EE}(v_{E} - H_{0})}) \,.
$$
Therefore, equation 
$$
  \frac{\mathrm{d}v_{E}}{\mathrm{d}t} = - H_{E}(v_{E}), \quad v_E(0) = H_0
$$
becomes an autonomous equation. Let $- R^{*}$ be the greatest root of $H_{E}(v_{E})$ that is less than
$H_{0}$. It is easy to check that $R_{E}^{*} < N_{E}$. Hence as $t \rightarrow \infty$ we have $v_{E}(t) \rightarrow
R^{*}$. This implies $H_{E}(t) \rightarrow 0$ and $R_{E}(t) \rightarrow R^{*}$ as $t \rightarrow
\infty$. 

It remains to estimate $R^{*}$. We have
$$
  H_{E}(-N_{E}) \leq \frac{N_{E}}{\delta_{E} - P_{EE}}(P_{EE}
  e^{\delta_{E}(-N_{E} - H_{0})} - \delta_{E} e^{ (-N_{E} -
    H_{0})}) < 0\ ,. 
$$
On the other hand, let $A = N_{E} + H_{0} - \alpha$, we have
$$
  H_{E}( - N_{E} +\alpha) = \alpha + N_{E}e^{-P_{EE}A} +
  \frac{P_{EE}N_{E}}{\delta_{E} - P_{EE}}(e^{-\delta_{E}A} -
  e^{-P_{EE}A}) \,.  
$$
By mean value theorem, we have
$$
  H_{E}( - N_{E} +\alpha) \geq \alpha + N_{E}e^{-P_{EE}A} -
  P_{EE}N_{E}Ae^{-A m} \,,
$$
where $m = \min\{\delta_{E}, P_{EE}\}$. Since $A > N_{E}$ by the
assumption of the theorem, we have
$$
  H_{E}( - N_{E} +\alpha) \geq \alpha - P_{EE}N_{E}^{2}e^{-m N_{E}} >0
$$
provided $N_{E} m > 1$. By the intermediate value theorem, $R_{*}$
must be between $N_{E}$ and $N_{E} - \alpha$. This implies
$R_{E}(\infty) = R_{*} > N_{E} - \alpha$. 

\medskip

It remains to check $R_{I}(\infty)$. Let $B = R^{*} + H_{0}$. Recall that we have
$$
  u_{E}(\infty) = N_{E}(1 - e^{-\delta_{E}B}) \,.
$$
On the other hand, if we treat $H_{E}(t)$ as a time-dependent
variable, we have
$$
  u_{E}(\infty) = N_{E}(1 - e^{-\delta_{E} \int_{0}^{\infty} H_{E}(s)
    \mathrm{d}s}) \,.
$$
This implies
$$
  \int_{0}^{\infty}H_{E}(s) \mathrm{d}s = B \,.
$$
Now let $u_{I} = R_{I} + G_{I}$, we have
$$
  \frac{\mathrm{d}u_{I}}{\mathrm{d}t} = \lambda_{I}H_{E}(t)(N_{I} -
  u_{I}) \,,
$$
which is again a separable equation. This implies
$$
  u_{I}(\infty) = N_{I}(1 - e^{-\delta_{I}B}) \,.
$$
Finally, we have
$$
  \frac{\mathrm{d}G_{I}}{\mathrm{d}t} =
  \frac{\mathrm{d}u_{I}}{\mathrm{d}t} - P_{IE}H_{E}G_{I} \,,
$$
which is a first order linear equation. Consider the initial condition
$G_{I}(0) = u_{I}(0) = 0$, we have
$$
  e^{P_{IE}\int_{0}^{\infty} H_{E}(s) \mathrm{d}s}G_{I}(\infty) =
  u_{I}(\infty) \,.
$$
Therefore, we have
$$
  R_{I}(\infty) = N_{I} - N_{I}e^{-\delta_{I}B} - e^{-P_{IE}B}N_{I}(1
  - e^{-\delta_{I}B}) > N_{I} - N_{I}(e^{-\delta_{I}B} + e^{-P_{IE}B}) \,.
$$
Since $B = R^{*} + H_{0} > N_{E} - \alpha + H_{0} > N_{E}$, we have
$$
  R_{I}(\infty) \geq N_{I} - N_{I}(e^{-\delta_{I}N_{E}} +
  e^{-P_{IE}N_{E}}) = N_{I} - \beta \,.
$$
The theorem then follows by the definition of limit and the continuous
dependency of the solution on parameters. 
\end{proof}
\section{Multiple firing event equation for many local populations} 
Let $m = 1, \cdots, M$ and $n = 1, \cdots, N$ be indice of local
populations. The multiple firing event equation contains variables
$H^{m,n}_{E}, G^{m,n}_{E}, R^{m,n}_{E}, H^{m,n}_{I}, G^{m,n}_{I},
R^{m,n}_{I}$, whose roles are the same as in equation \eqref{MFE}. For
the sake of simplicity denote
$$
  J^{m,n}_{Q_{1}Q_{2}} = P_{Q_{1}Q_{2}}H^{m,n}_{Q_{2}} +
  \rho_{Q_{1}Q_{2}}\sum_{(m',n') \in
    \mathcal{N}(m,n)}H^{m',n'}_{Q_{2}} \,.
$$
For each $(m,n)$, the time evolution of variables $H^{m,n}_{E}, G^{m,n}_{E}, R^{m,n}_{E}, H^{m,n}_{I}, G^{m,n}_{I},
R^{m,n}_{I}$ are given by equations
\begin{equation}
\label{MFE2}
\begin{split}
\frac{\mathrm{d} H^{m,n}_{E}}{\mathrm{d} t} = & - \tau_{E}^{-1} H^{m,n}_{E} +
                                           \tau_{E}^{-1}J^{m,n}_{EE}G^{m,n}_{E}
  + \frac{\lambda_{E}}{S_{EE}} G^{m,n}_{E}\\
\frac{\mathrm{d} G^{m,n}_{E}}{\mathrm{d}t} =& c_{E} \max\{
                                         \tau_{E}^{-1}S_{EE}J^{m,n}_{EE}
                                         + \lambda_{E}
                                         - \tau_{I}^{-1}S_{EI}J^{m,n}_{EI},
                                         0\}\cdot ( N_{E} -
                                         G^{m,n}_{E} - R^{m,n}_{E}) -\\
&
                                         \tau_{I}^{-1}\max\{\frac{S_{EI}}{S_{EE}},
                                         1\}J^{m,n}_{EI}G^{m,n}_{E} -
   (\tau_{E}^{-1}J^{m,n}_{EE} + \frac{\lambda_{E}}{S_{EE}})G^{m,n}_{E}
                                        \\ 
\frac{\mathrm{d} R^{m,n}_{E}}{\mathrm{d}t}=&
                                        \tau_{E}^{-1}P_{EE}J^{m,n}_{EE}
                                        +  \frac{\lambda_{E}}{S_{EE}} G^{m,n}_{E}\\
\frac{\mathrm{d} H^{m,n}_{I}}{\mathrm{d} t} = & - \tau_{I}^{-1} H^{m,n}_{I} +
                                           \tau_{E}^{-1}J^{m,n}_{IE}G^{m,n}_{I}
  + \frac{ \lambda_{I}}{S_{IE}}G^{m,n}_{I}\\
\frac{\mathrm{d} G^{m,n}_{I}}{\mathrm{d}t} =& c_{I} \max\{
                                         \tau_{E}^{-1}S_{IE}J^{m,n}_{IE}
                                         + \lambda_{I}
                                         -
                                         \tau_{I}^{-1}S_{II}J^{m,n}_{II},
                                         0\}\cdot ( N_{I} -
                                         G^{m,n}_{I} - R^{m,n}_{I}) \\
& -\tau_{I}^{-1}\max\{ \frac{S_{II}}{S_{IE}}, 1\}J^{m,n}_{II}G^{m,n}_{I} -
   (\tau_{E}^{-1}J^{m,n}_{IE} + \frac{\lambda_{I}}{S_{IE}})G^{m,n}_{I}\\
\frac{\mathrm{d} R^{m,n}_{I}}{\mathrm{d}t}=&
                                        \tau_{E}^{-1}J^{m,n}_{IE}G^{m,n}_{I}
                                        + \frac{
                                          \lambda_{I}}{S_{IE}}G^{m,n}_{I} \,.
\end{split}
\end{equation}

\bibliography{myref}{}
\bibliographystyle{amsplain}
\end{document}